\documentclass[12pt]{article}
\usepackage[utf8]{inputenc}
\usepackage[T1]{fontenc}
\usepackage{tikz}
\usepackage{setspace} 

\usetikzlibrary{arrows}
\usepackage{amssymb,amsthm,verbatim,mathtools}
\usepackage{amsfonts,graphicx,bm}
\usepackage{geometry, url}
\usepackage{xcolor}
\numberwithin{equation}{section}
\usepackage[authoryear]{natbib}
\newtheorem{definition}{Definition}[section]
\newtheorem{theorem}{Theorem}[section]

\newtheorem{example}{Example}[section]
\newtheorem{lemma}{Lemma}[section]
\newtheorem{corollary}{Corollary}[section]
\usepackage[pdfencoding=auto,colorlinks,linkcolor=black,citecolor=blue,urlcolor=blue]{hyperref}

\newcommand{\defsep}{\vspace{5mm}}

\usepackage{etoolbox}
\AtBeginEnvironment{equation}{\setlength{\abovedisplayskip}{5pt}}
\AtBeginEnvironment{equation}{\setlength{\belowdisplayskip}{5pt}}
\AtBeginEnvironment{equation}{\setlength{\abovedisplayshortskip}{5pt}}
\AtBeginEnvironment{equation}{\setlength{\belowdisplayshortskip}{5pt}}

\AtBeginDocument{
    \setlength{\abovedisplayskip}{5pt}  
    \setlength{\belowdisplayskip}{5pt}  
    \setlength{\abovedisplayshortskip}{5pt}  
    \setlength{\belowdisplayshortskip}{5pt}  
}

\title{Birnbaum’s principles in Venn diagrams:\\ Extended version with Lean verification}
\author{Jaime Enrique Lincovil Curivil\thanks{\footnotesize email: {\tt jlincovilc@uni.edu.pe}}
\vspace{-0.2cm}\\
     {\it \footnotesize Universidad Nacional de Ingenier\'ia}
     \vspace{-0.4cm}\\
     {\it \footnotesize Av. T\'upac Amaru 210, Lima, Per\'u}
     \vspace{0.3cm}\\
     Alexandre Galv\~ao Patriota\thanks{\footnotesize email: {\tt patriota@ime.usp.br}}
     \vspace{-0.2cm}\\
     {\it \footnotesize Departamento de Estat\'istica, IME,
     Universidade de S\~ao Paulo}
     \vspace{-0.4cm}\\
     {\it \footnotesize Rua do Mat\~ao, 1010, S\~ao Paulo/SP, 05508-090, Brazil}
     \\ \\
}
\date{}

\hypersetup{
    pdftitle={Birnbaum's principles in Venn diagrams},
    pdfauthor={Jaime Lincovil and Alexandre Galv\~ao Patriota}
}

\begin{document}
\maketitle

\begin{abstract}
Birnbaum's theorem states that the sufficiency and conditionality
principles jointly imply, and are implied by, the likelihood principle. To enable formal certification of the results in Lean, we make the conditionality relation explicit through a finite-sample formulation based on \cite{Evans2013}.
We reformulate the theorem as a statement about classes of statistical
procedures that preserve statistical relations, and show that the proof
reduces to the propagation of evidential equality along chains of
conditionality-related inference bases (experiment--observation pairs).
 The main results are illustrated by Venn diagrams, and 
  a worked finite example exhibits two pairs of inference bases: one related by conditionality but not by sufficiency, and the other related by sufficiency but not by conditionality. For finite parameter spaces
with at least two points, the class of likelihood-invariant procedures
with codomain $[0,1]$ is strictly contained in the class of
sufficiency-invariant procedures.\\

	{\bf Keywords:} Birnbaum's theorem; Statistical principles;
	Statistical relations; Statistical procedures; Venn diagrams.
\end{abstract}

\section{Introduction}\label{introduction}

In statistical inference, sufficient and ancillary statistics are typically used to reduce the complexity of statistical models \citep{PA1997}. In addition, it is often assumed that the likelihood function contains all the experimental information about the parameter of interest \cite[page~73]{Fisher1973} and \cite[page~1]{BW1988}. Moreover, according to \cite{Cox1958}, the sample space of the statistical model should not be defined purely on power or frequentist arguments; he argued that it must also be based on the features of the experiment as actually performed. In a seminal contribution, \cite{Birnbaum1962} formalized the Likelihood (LP), Sufficiency (SP) and Conditionality (CP) principles axiomatically. He stated that  the joint adoption of the CP and the SP  leads to a mathematical characterization of the  ``evidential meaning'' by means of the likelihood function \citep[][page~271, 285] {Birnbaum1962}.  This statement is summarized by the equivalence
\begin{equation}\label{BT0}
	``\mbox{LP} \Longleftrightarrow (\mbox{CP }\& \mbox{ SP})".
\end{equation}

Some scholars have considered this a breakthrough
\citep{Basu1975, Dawid1977, BW1988, Bj1991, Bj1996}; others
raised concerns about the formulations of the principles,
common interpretations of the theorem, and the discrepancy
between Birnbaum's CP and the conditionality arguments
employed in practice
\citep{Joshi1976,Durbin1970,Holm1985,BN1995,Helland1995,Mayo2014}.
{Further analyses and reformulations of the likelihood principle
and Birnbaum's argument include
\citep{Hill1987,Joshi1989,Mayo2010,Martin2014,Gandenberger2015,Pena2017}.}
The two main ingredients of Birnbaum's formulation are:
\begin{itemize}
	\item an \textit{inference base}\footnote{\citet{Birnbaum1962} originally named the pair $(E,x)$ as an ``instance of statistical evidence''.} $(E,x)$, where $E$ is the statistical model and $x$ is the outcome of $E$, and

	\item the evidential meaning $Ev(E,x)$, which is meant to represent the essential properties of the statistical evidence contained in $(E,x)$. 

\end{itemize}

To illustrate these concepts, the following example presents four statistical models with the same parameter space $\Theta = \{1,2\}$, involving a random vector $(X_1, X_2)$ and the statistic $T(X_1, X_2) = X_1 + X_2$.

\begin{example}\label{inference_bases}

	Let $E_0$, $E_1$, $E_2$ and $E_T$ be statistical models with the same parameter space $\Theta=\{1,2 \}$, where the respective sample spaces are given by $\mathcal{X}_{E_0}= \{ 0, 1\}^2$, $\mathcal{X}_{E_1}= \{ 0, 1\}$, $\mathcal{X}_{E_2}= \{ 0, 1\}$ and $\mathcal{X}_{E_T}= \{ 0, 1, 2\}$. Here, $E_0$ is the joint probability model of two random variables $(X_1,X_2)$, $E_1$ and $E_2$ are the respective marginal models, and $E_T$ is the model induced by the statistic $T(X_1, X_2) = X_1+X_2$. The probability functions are given in the following table:

	\begin{table}[!ht]
		\centering
		\caption{The joint probability functions of $E_0$, the marginal models $E_j$, $j=1,2$, and $E_T$.}\label{tab:my-table}
		\vspace{0.5cm}
		\begin{tabular}{|c|c|c|}\hline
			$(X_1 , X_2)$ & $f_{E_0,1}$ & $f_{E_0,2}$ \\\hline
			(0,0)         & 0.1         & 0.3         \\ \hline
			(0,1)         & 0.4         & 0.2         \\ \hline
			(1,0)         & 0.4         & 0.2         \\ \hline
			(1,1)         & 0.1         & 0.3         \\ \hline
		\end{tabular} \hspace{0.5cm}
		\begin{tabular}{c}
			\begin{tabular}{|l|l|l|}\hline
				$X_1$ & $f_{E_1,1}$ & $f_{E_1,2}$ \\ \hline
				0     & 0.5         & 0.5         \\ \hline
				1     & 0.5         & 0.5         \\ \hline
			\end{tabular} \\
			\\
			\begin{tabular}{|l|l|l|}\hline
				$X_2$ & $f_{E_2,1}$ & $f_{E_2,2}$ \\ \hline
				0     & 0.5         & 0.5         \\ \hline
				1     & 0.5         & 0.5         \\ \hline
			\end{tabular}
		\end{tabular}\hspace{0.5cm}
		\begin{tabular}{|c|c|c|}\hline
			{$T(X_1,X_2)$} & $f_{E_T,1}$ & $f_{E_T,2}$ \\\hline
			0              & 0.1         & 0.3         \\ \hline
			1              & 0.8         & 0.4         \\ \hline
			2              & 0.1         & 0.3         \\ \hline
		\end{tabular}
	\end{table}

	For the joint model, all possible inference bases are $(E_0, (0,0))$, $(E_0, (0,1))$, $(E_0, (1,0))$ and $(E_0, (1,1))$. For the marginal models, the inference bases are $(E_1, 0)$, $(E_1, 1)$, $(E_2, 0)$ and $(E_2, 1)$. Finally, for the model induced by the statistic $T$, the inference bases are $(E_T, 0)$, $(E_T, 1)$ and $(E_T, 2)$. These eleven pairs are instances of inference bases whose sample and parameter spaces are finite, following the finite-model framework of \cite{Evans2013}. Note that the maximum likelihood estimates are $\widehat{\theta}_{E_0}(0,0)=2$, $\widehat{\theta}_{E_0}(0,1)=1$, $\widehat{\theta}_{E_0}(1,0)=1$ and $\widehat{\theta}_{E_0}(1,1)=2$.

	Since $X_1+X_2=0$ if and only if $(X_1,X_2)=(0,0)$, it is
	natural to expect the same inferential conclusions from
	$(E_T,0)$ and $(E_0,(0,0))$; similarly for $(E_T,2)$ and
	$(E_0,(1,1))$. For each of these four inference bases the
	maximum likelihood estimate is $2$, and the induced models of
		the MLE under $E_0$ and $E_T$ coincide. {Thus, for the
		maximum-likelihood set procedure $Ev_{\mathrm{ml}}$,}
	${Ev_{\mathrm{ml}}}\bigl(E_0,(0,0)\bigr)
		=
		{Ev_{\mathrm{ml}}}\bigl(E_0,(1,1)\bigr)
		=
		{Ev_{\mathrm{ml}}}\bigl(E_T,0\bigr)
		=
		{Ev_{\mathrm{ml}}}\bigl(E_T,2\bigr)$.
	However, maximum likelihood estimators do not always have
	identical induced models; see
	Example~\ref{Hierarchical-Example}.
\end{example}

\cite{Birnbaum1962} used inference bases, evidential meaning,
and inferential equivalence to define his statistical
principles. We introduce a \textit{statistical procedure} as a
binary relation, building on the formalization of these
principles as relations by \cite{Evans2013} (revisited in
Section~\ref{Evans-construal}). Whereas \cite{Evans2013}
framed the principles as relations on the space of inference
bases, we recast LP, CP and SP as invariance requirements on
classes of statistical procedures indexed by an arbitrary
codomain of inferential outputs. Birnbaum's theorem then
becomes a statement about the coincidence of these procedure
classes. Our framework includes Evans' setting as a special
case and allows a unified treatment of $p$-values, estimators,
likelihood functions, and confidence regions. We work with finite effective sample spaces and a common non-empty finite parameter space to obtain
procedure-level versions of Birnbaum's theorem and illustrate
them with Venn diagrams. \cite{Evans2013} diagnosed that the
proof of Birnbaum's theorem reduces to the propagation of
evidential equality along $C$-chains. Making this fully explicit through statistical procedures and Venn diagrams, we focus on whether local conditionality equivalence should be represented by equality of the values of a single global procedure $Ev$; once that representation is adopted, propagation along $C$-chains is automatic. 

Section~\ref{Evans-construal} revisits the framework of
statistical relations, with graphical representations.
Section~\ref{Class} introduces statistical procedures and their
principal instantiations. Section~\ref{remarks} concludes. The results stated here are certified in Lean~4 \citep{deMouraUllrich2021} under their stated assumptions.\footnote{\raggedright Code and verification
instructions: {\hypersetup{urlcolor=black}\url{https://github.com/AGPatriota/Birnbaum_Theorem_VennDiagrams}}.} The Lean development also certifies counterparts of all ten numbered results of \cite{Evans2013}; Lemmas~1--6, Theorems~7--9, and Corollary~10. The general relation lemmas are proved abstractly; the statistical results are proved from our definitions of $L$, $S$, and $C$ in the finite setting, under the stated parameter-space assumptions. A selected set of the formal statements, including Theorem~\ref{BT1}, Corollaries~\ref{BT2}--\ref{BT3}, the finite closure argument, the likelihood-ratio $p$-value result, and counterparts of the ten numbered results of \cite{Evans2013}, has also been verified through Palomar, with the \texttt{Challenge.lean} restricted to 1,000 lines \citep[see][for more details]{palomar-2026-09-18-000006-v1}.

\section{Evans' framework of statistical relations}\label{Evans-construal}

We begin by establishing the elementary components of our analysis, following the set-theoretic formulation of \cite{Evans2013}. A statistical model, $E$, is defined by a sample space, $\mathcal X_E$, and a family of probability measures, $\mathcal{P}_E = \{\mathbb{P}_{E,\theta} : \theta \in {\Theta}\}$, indexed by {a common non-empty parameter space $\Theta$}. We assume that both $\mathcal{X}_E$ and $\Theta$ are finite and also that $\mathcal X_E = \{ x:\max_{\theta\in\Theta} f_{E,\theta}(x)>0 \}$, where $f_{E,\theta}(x)= \mathbb{P}_{E,\theta}(\{x\})$, $x\in\mathcal{X}_E$, is the probability mass function (PMF) of the statistical model $E$. Rows with $f_{E,\theta}(x)=0$ for all $\theta$ in the conditional tables are illustrative; the actual conditional
sample space is the effective support. When an outcome $x \in \mathcal{X}_E$ is observed, the pair $I = (E, x)$ forms an \textit{inference base}. We denote by $I_i=(E_i,x_i)$ an inference base with $E_i=(\mathcal{X}_{E_i},\mathcal{P}_{E_i})$, $x_i\in \mathcal{X}_{E_i}$ and $\mathcal{P}_{E_i}=\{\mathbb{P}_{E_i,\theta}:\theta\in {\Theta}\}$.

Within a fixed set-theoretic universe, let $\mathcal I$ be the set of all inference
bases with a common non-empty finite parameter space $\Theta$
\citep{Joshi1976,Evans1985}, a finite non-empty effective sample space,
and a fixed injective outcome code $c_E:\mathcal X_E\to\mathbb N$.
The code convention for conditioning is specified below. All relations and
closures use this same $\mathcal I$, including the auxiliary models in the
proofs.

\subsection{Statistical relations}\label{statistical_relations}

As defined by \cite{Evans2013}, a statistical relation $D$ is a set containing pairs of inference bases, i.e., $D \subseteq \mathcal{I} \times \mathcal{I}$. A statistical relation contains pairs of inference bases that share a specific relationship. In what follows, we revisit the likelihood relation, the sufficiency relation, and the conditionality relation  \citep{Evans2013} and provide some examples. The first relation is the likelihood relation, which \cite{Evans2013} defined as

\defsep
\noindent 
\fbox{%
	\begin{minipage}{\dimexpr\linewidth-2\fboxsep-2\fboxrule\relax}
		{\scshape The likelihood relation} is the set $L \subseteq \mathcal{I} \times \mathcal{I}$
		defined by $(I_1,I_2)\in L$ if, and only if, there exists $ k:=k(x_1,x_2)>0$ such that
		$f_{E_1, \theta}(x_1) = k f_{E_2, \theta}(x_2)$ for every $\theta \in \Theta$.
	\end{minipage}}
\vspace*{2mm}

The set $L$ contains pairs of inference bases whose
likelihood functions are proportional to each
other. To illustrate the relationship generated by
$L$, consider the following example:

\begin{example}[Inference pairs in $L$]
	In Example \ref{inference_bases}, the inference bases $(E_0, (0,0))$ and $(E_T, 0)$ are related through $L$ since they have proportional likelihood functions $f_{E_0,\theta}(0,0) = f_{E_T, \theta}(0)$ for $\theta=1,2$; then, $\Big( (E_0, (0,0)), (E_T, 0) \Big) \in L$. Similarly, other pairs in Example \ref{inference_bases} are also in the relation $L$, including $\Big( (E_0, (1,1)), (E_T, 2) \Big)$, $\Big( (E_1, 0), (E_2, 0) \Big)$, $\Big( (E_1, 1), (E_2, 1) \Big)$, $\Big( (E_1, 0), (E_2, 1) \Big)$ and $\Big( (E_1, 1), (E_2, 0) \Big)$.
\end{example}
%

With the likelihood relation $L$
characterized, we now define the
sufficiency relation $S$, which pairs
inference bases whose minimal sufficient
statistics differ only by a bijection
\citep{Evans2013}.

\defsep
\noindent
\fbox{%
	\begin{minipage}{\dimexpr\linewidth-2\fboxsep-2\fboxrule\relax}
		{\scshape The sufficiency relation} is the set $S\subseteq\mathcal I\times\mathcal I$
		defined by $(I_1,I_2)\in S$ if and only if there exist minimal sufficient
		statistics $T_i:\mathcal X_{E_i}\twoheadrightarrow\mathcal X_{E_{T_i}}$,
		$i=1,2$, and a bijection $h:\mathcal X_{E_{T_2}}\to\mathcal X_{E_{T_1}}$
		such that $f_{E_{T_1},\theta}(h(t))=f_{E_{T_2},\theta}(t)$ for every
		$\theta\in\Theta$ and $t\in\mathcal X_{E_{T_2}}$, and
		$T_1(x_1)=h\!\left(T_2(x_2)\right)$.
		Here $\twoheadrightarrow$ means that $T_i$ maps onto the reduced sample
		space: each $t\in\mathcal X_{E_{T_i}}$ equals $T_i(x)$ for some
		$x\in\mathcal X_{E_i}$.

	\end{minipage}}
\vspace*{2mm}

Under the standing finite effective-support assumptions, the likelihood-class
statistic
\[
	T_E(x):=\{y\in\mathcal X_E:\exists k>0\ \forall\theta\in\Theta,
	f_{E,\theta}(y)=k f_{E,\theta}(x)\}
\]
is sufficient and minimal sufficient, as is every injective relabelling of
$T_E$ \citep{Basu1969,Evans2013}. Thus the sufficiency relation is well-defined.
This pointwise criterion can fail for general dominated models because
density versions may differ on parameter-dependent null sets
\citep{CavalcantePatriota2026}.

In order to illustrate the sufficiency relation, we now present examples of inference bases that satisfy
$S$.
\begin{example}[Inference pairs in $S$]\label{ex:pairs-in-S}
	Consider the statistic
	\[
		M(X_1,X_2)=\bm{1}_{\{T(X_1,X_2)=1\}},
	\]
	where $T(X_1,X_2)=X_1+X_2$ is the statistic introduced in Example~\ref{inference_bases} and $\bm{1}$ denotes the indicator function. The statistic $M$ is minimal sufficient for $E_0$, since
	\[
		M(x_1,x_2)=M(y_1,y_2)
		\iff
		{\exists k>0\ \ \forall\theta\in\Theta,\quad
		f_{E_0,\theta}(x_1,x_2)=k f_{E_0,\theta}(y_1,y_2)}.
	\]
	For the induced model $E_T$, consider the statistic
	\[
		M_T(t)=\bm{1}_{\{t=1\}}.
	\]
	Then $M_T$ is minimal sufficient for $E_T$. The induced models $E_M$ and $E_{M_T}$ are given in Table~\ref{h0T} and coincide. Therefore, taking $h$ as the identity on $\{0,1\}$, the pairs
	\[
		\begin{gathered}
			\bigl((E_0,(0,0)),(E_T,0)\bigr),\quad
			\bigl((E_0,(1,1)),(E_T,2)\bigr),\\
			\bigl((E_0,(0,1)),(E_T,1)\bigr),\quad
			\bigl((E_0,(1,0)),(E_T,1)\bigr)
		\end{gathered}
	\]
	belong to $S$. Moreover, since $M(0,0)=M(1,1)$ and $M(0,1)=M(1,0)$, the pairs
	\[
		\bigl((E_0,(0,0)),(E_0,(1,1))\bigr)
	\]
	and
	\[
		\bigl((E_0,(0,1)),(E_0,(1,0))\bigr)
	\]
	also belong to $S$, confirming explicitly the bijection required by the definition.

	\begin{table}[!ht]
		\centering
		\caption{Probability functions of the induced models $E_M$ and $E_{M_T}$.}
		\label{h0T}
		\begin{tabular}{|c|c|c|c|c|}\hline
			$m$ & $f_{E_M,1}$ & $f_{E_M,2}$ & $f_{E_{M_T},1}$ & $f_{E_{M_T},2}$ \\ \hline
			0   & 0.2         & 0.6         & 0.2             & 0.6             \\ \hline
			1   & 0.8         & 0.4         & 0.8             & 0.4             \\ \hline
		\end{tabular}
	\end{table}

\end{example}
\vspace{3mm}
In addition to the likelihood and sufficiency relations, another fundamental statistical relation is the conditionality relation defined below.

\defsep
\noindent 
\fbox{%
	\begin{minipage}{\dimexpr\linewidth-2\fboxsep-2\fboxrule\relax}
		{\scshape The conditionality relation} is the set $C \subseteq \mathcal{I}\times \mathcal{I}$ defined by $(I_1,I_2)\in C$ if and only if there exists an ancillary statistic $A:\mathcal{X}_{E_1}\to \mathcal{A}_0$ for $E_1$ such that, with $a=A(x_1)$ {and $\mathbb P_{E_1,\theta}(A=a)>0$ for every $\theta\in\Theta$}, the model $E_2$ is the conditional model of $E_1$ given $A=a$, and the observed values satisfy either $x_1=x_2$ or $x_1=(a,x_2)${, in the precise sense specified immediately below, or the same conditions hold with the roles of $I_1$ and $I_2$ reversed}.
	\end{minipage}}
\vspace*{2mm}

Precisely, there is an injection $\iota:\mathcal X_{E_2}\to\mathcal X_{E_1}$
onto the observed fiber $A^{-1}\{a\}$, with $\iota(x_2)=x_1$ and
\[
f_{E_2,\theta}(y)
=
\frac{f_{E_1,\theta}(\iota(y))}
     {\mathbb P_{E_1,\theta}(A=a)}
\qquad
\text{for every }\theta\in\Theta
\text{ and }y\in\mathcal X_{E_2}.
\]
In the ordinary branch, denoted by $x_1=x_2$, the injection preserves codes:
$c_{E_1}(\iota(y))=c_{E_2}(y)$ for every $y$. In the mixed-experiment branch,
denoted by $x_1=(a,x_2)$, $A$ is surjective onto a nontrivial $\mathcal A_0$;
no code-preservation condition is imposed. This makes precise the component
identification in Evans' auxiliary experiments \citep{Evans2013}.
Arbitrary relabelling alone is not a direct $C$-step. As $\mathbb P_{E_1,\theta}(A=a)$ does not depend on $\theta$,  the observed likelihoods are positively proportional, so $C\subseteq L$.

\medskip\noindent
Under this interpretation, $C$ subsumes the two conditionality
arguments used in the literature: the {strong} conditionality
principle (conditioning on an ancillary within a single
experiment; \cite[page~2650]{Evans2013}) and the {weak}
(mixture) conditionality principle (conditioning on the component
selector in a mixture experiment; \cite{Cox1958,Birnbaum1962}).
\citet{Kalbfleisch1975} distinguishes experimental from mathematical
ancillaries and develops a restricted experimental conditionality principle. The
role-reversal disjunct in the definition makes $C$ symmetric. Thus $C$ pairs inference bases in which
one model is obtained from the other by conditioning on an
ancillary statistic. Symmetrization is for technical convenience;
since its smallest equivalence relation is symmetric by construction, none of our
closure results depend on whether $C$ itself is taken to be
symmetric.

The definition above makes explicit, for the finite setting considered here, the conditionality relation  used in the arguments of \cite{Evans2013}. In particular, the sample-space identifications that are implicit in Evans' formulation are stated explicitly here in order to permit its formalization in Lean.

\begin{example}[Inference pairs in $C$]\label{Cond-example}
	In Example \ref{inference_bases}, let $U(X_1, X_2)=X_1$ and $V(X_1, X_2)= X_2$. As the distributions of $U$ and $V$ do not depend on $\theta$, they are ancillary to $E_0$. Denote by $E_{0|U=0}$ and $E_{0|V=0}$ the conditional models of $(X_1, X_2)$ given $U(X_1, X_2)=0$ and $V(X_1, X_2)=0$, respectively{.}
	{The two conditional models obtained from the values of $V$
	are displayed in Table~\ref{tab:Joint-Cond-V}; the $U=0$ and $V=0$
	conditional models are compared in Table~\ref{tab:Joint-Cond-V-U}.}

	By definition,
	\[
	\bigl((E_{0|U=0},(0,0)),(E_0,(0,0))\bigr)\in C
	\quad\text{and}\quad
	\bigl((E_0,(0,0)),(E_{0|V=0},(0,0))\bigr)\in C.
	\]
	However,
	\[
	\bigl((E_{0|U=0},(0,0)),(E_{0|V=0},(0,0))\bigr)\notin C.
	\]
	On each effective two-point support, every nonconstant statistic is
	injective, and the probability of either singleton depends on $\theta$.
	Thus only the trivial ancillary is available, ruling out the nontrivial
	mixed-experiment branch. The ordinary branch preserves the inherited
	outcome labels: the relabelling $(0,1)\mapsto(1,0)$, although it identifies
	the conditional models, is not a direct $C$-step.
	Thus $C$ is not transitive.

	Nevertheless, the endpoint pair belongs to $L$, because
	\[
	f_{E_{0|U=0},\theta}(0,0)
	=
	f_{E_{0|V=0},\theta}(0,0)
	\qquad
	\text{for every }\theta\in\Theta.
	\]
	
	For discussions of ancillary statistics and conditionality, see
	\cite{Basu1959,Evans2023}.

	\begin{table}[!htbp]
		\centering
		\caption{The conditional probabilities of $E_{0|V=0}$ and $E_{0|V=1}$.}
		\label{tab:Joint-Cond-V}
		\vspace{0.5cm}
		\begin{tabular}{|c|c|c|c|c|}\hline
			$(X_1 , X_2)$ & $f_{E_{0|V=0},1}$ & $f_{E_{0|V=0},2}$ & $f_{E_{0|V=1},1}$ & $f_{E_{0|V=1},2}$ \\ \hline
			(0,0)         & 0.2               & 0.6               & 0                 & 0                 \\ \hline
			(0,1)         & 0                 & 0                 & 0.8               & 0.4               \\ \hline
			(1,0)         & 0.8               & 0.4               & 0                 & 0                 \\ \hline
			(1,1)         & 0                 & 0                 & 0.2               & 0.6               \\ \hline
		\end{tabular}
	\end{table}

	\begin{table}[!htbp]
		\centering
		\caption{The conditional joint probabilities of $E_{0|U=0}$ and $E_{0|V=0}$.}
		\label{tab:Joint-Cond-V-U}
		\vspace{0.5cm}
		\begin{tabular}{|c|c|c|c|c|}\hline
			$(X_1 , X_2)$ & $f_{E_{0|U=0},1}$ & $f_{E_{0|U=0},2}$ & $f_{E_{0|V=0},1}$ & $f_{E_{0|V=0},2}$ \\\hline
			(0,0)         & 0.2               & 0.6               & 0.2               & 0.6               \\ \hline
			(0,1)         & 0.8               & 0.4               & 0                 & 0                 \\ \hline
			(1,0)         & 0                 & 0                 & 0.8               & 0.4               \\ \hline
			(1,1)         & 0                 & 0                 & 0                 & 0                 \\ \hline
		\end{tabular}
	\end{table}

	Table~\ref{tab:MLE-E-U=0} shows the induced probability functions of the maximum likelihood estimators under the joint and conditional models $E_{0}$ and $E_{0|U=0}$, respectively. The table highlights that these two induced distributions coincide.

	\begin{table}[!htbp]
		\centering
		\caption{The induced probabilities of the maximum-likelihood estimators $\widehat{\theta}_{E_0}$ and $ \widehat{\theta}_{E_{0|U=0}}$.}
		\label{tab:MLE-E-U=0}
		\vspace{0.5cm}
		\begin{tabular}{|c|c|c|c|c|}\hline
			$\widehat{\theta}$ & $f_{ \widehat{\theta}_{E_0}, 1}(\widehat{\theta})$ & $f_{ \widehat{\theta}_{E_0}, 2}(\widehat{\theta})$ & $f_{ \widehat{\theta}_{E_{0|U=0}}, 1}(\widehat{\theta})$ & $f_{ \widehat{\theta}_{E_{0|U=0}}, 2}(\widehat{\theta})$ \\ \hline
			1                  & 0.8                                                & 0.4                                                & 0.8                                                      & 0.4                                                      \\ \hline
			2                  & 0.2                                                & 0.6                                                & 0.2                                                      & 0.6                                                      \\ \hline
		\end{tabular}
	\end{table}
	{This example establishes nontransitivity for the present
	labelled-outcome relation $C$, paralleling Lemma~6 of
	\cite{Evans2013}.}
\end{example}

In Example~\ref{Cond-example}, the maximum likelihood estimates $\widehat\theta_{E_0}(0,0)=\widehat\theta_{E_{0|U=0}}(0,0)=2$
and the induced models coincide
(Tables~\ref{tab:my-table}, \ref{tab:Joint-Cond-V-U},
\ref{tab:MLE-E-U=0}). This might suggest that $C$ aligns with
the frequentist approach. However,
Example~\ref{Hierarchical-Example} shows that $C$-related bases
can share the same maximum likelihood estimates yet induce different models.

\begin{example}[Hierarchical experiment]\label{Hierarchical-Example}
	Consider a hierarchical experiment in which one first tosses a fair coin and then conducts one of two random experiments. If the coin lands on \text{``tails''} (i.e., $U=0$) with probability $1/2$, we roll a three-sided die whose statistical model is $E_3$. Conversely, if it lands on \text{``heads''} (i.e., $U=1$) with probability $1/2$, we toss another coin whose statistical model is $E_4$. Let $E^{\star}$ be the mixture model composed of $E_3$ and $E_4$ with sample space
	$\mathcal{X}_{E^{\star}}= \{0\}\times \{1,2,3 \} \cup  \{1\}\times \{0,1 \} = \{(0,1),(0,2),(0,3),(1,0),(1,1)\}$. The probability functions of $E_3$, $E_4$ and $E^{\star}$ are listed in Table~\ref{tab:hierar}, where $X_3$ denotes the number on the upward face of the die, $X_4$ is $1$ if the coin shows heads and $0$ otherwise, and $X^{\star}= (1-U) X_3 + U X_4$.

	\begin{table}[!htbp]
		\centering
		\caption{The probability functions of $E_3$, $E_4$ and $E^\star$.}
		\label{tab:hierar}
		\vspace{0.5cm}
		\footnotesize
		\setlength{\tabcolsep}{3pt}
		\begin{tabular}{ccc}
			\begin{tabular}{c}
				\begin{tabular}{|c|c|c|}\hline
					$X_3$ & $f_{E_3,1}$ & $f_{E_3,2}$ \\\hline
					1     & 1/3         & 5/12        \\ \hline
					2     & 1/3         & 2/12        \\ \hline
					3     & 1/3         & 5/12        \\ \hline
				\end{tabular} \\
				\\
				\begin{tabular}{|l|l|l|}\hline
					$X_4$ & $f_{E_4,1}$ & $f_{E_4,2}$ \\ \hline
					0     & 1/2         & 7/10        \\ \hline
					1     & 1/2         & 3/10        \\ \hline
				\end{tabular}
			\end{tabular} &
			\begin{tabular}{|c|c|c|}\hline
				$(U,X^{\star})$ & $f_{E^{\star},1}$ & $f_{E^{\star},2}$ \\\hline
				(0,1)           & 1/6               & 5/24              \\ \hline
				(0,2)           & 1/6               & 1/12              \\ \hline
				(0,3)           & 1/6               & 5/24              \\ \hline
				(1,0)           & 1/4               & 7/20              \\ \hline
				(1,1)           & 1/4               & 3/20              \\ \hline
			\end{tabular}
		\end{tabular} 
	\end{table}

	\begin{sloppypar}
		The following pairs of inference bases are related by $C$:
		$\big( (E^{\star}, (0,1)), (E_3, 1)\big)$,
		$\big( (E^{\star}, (0,2)), (E_3,2)\big)$,
		$\big( (E^{\star}, (0,3)), (E_3,3)\big)$,
		$\big( (E^{\star}, (1,0)), (E_4, 0)\big)$ and
		$\big( (E^{\star}, (1,1)), (E_4, 1)\big)$, since the inference bases $(E_3,x)$ and $(E_4,x)$ are obtained from the conditional models of $E^{\star}$ given $U=u$.
	\end{sloppypar}

	Note that the inference bases  $(E^{\star}, (0,1))$ and
	$(E_3, 1)$ yield the same maximum likelihood estimate,
	i.e., $\widehat{\theta}_{E^{\star}}(0,1)= \widehat{\theta}_{E_3}(1)=2$. However, the induced probabilities of the corresponding maximum likelihood estimators are different, as shown in Table~\ref{tab:MLE}.
	\begin{table}[!htbp]
		\centering
		\caption{The induced probabilities of the maximum likelihood estimators $\widehat{\theta}_{E^\star}$ and $\widehat{\theta}_{E_3}$.}
		\label{tab:MLE}
		\vspace{0.5cm}
		\begin{tabular}{|c|c|c|c|c|}\hline
			$\widehat{\theta}$ & $f_{ \widehat{\theta}_{E^{\star}}, 1}(\widehat{\theta})$ & $f_{ \widehat{\theta}_{E^{\star}}, 2}(\widehat{\theta})$ & $f_{ \widehat{\theta}_{E_3}, 1}(\widehat{\theta})$ & $f_{ \widehat{\theta}_{E_3}, 2}(\widehat{\theta})$ \\\hline
			1                  & 10/24                                                    & 7/30                                                     & 1/3                                                & 2/12                                               \\\hline
			2                  & 14/24                                                    & 23/30                                                    & 2/3                                                & 10/12                                              \\\hline
		\end{tabular}
	\end{table}
\end{example}

Although $C$-related bases can share the same MLE, the induced
models may differ, a phenomenon documented by
\cite{Evans1986}, \cite{Mayo2014} and isolated here by a hierarchical
construction. Frequentist procedures (e.g., confidence intervals and hypothesis tests) may then produce divergent conclusions about $\theta$
depending on the inference base. From the perspective of
\cite{Mayo2014}, such divergence is a feature, not a flaw: for
a frequentist, the sampling distribution is an essential
component of statistical evidence.

\begin{example}[Incomparability of $S$ and $C$]\label{ex:incomparability-S-C}
	The relations $S$ and $C$ are incomparable, as the definitions imply and the following pairs verify.

	First, consider the pair of inference bases
	\[
		p_A :=
		\Bigl(
		(E_0,(0,0)),(E_0,(1,1))
		\Bigr).
	\]
	By Example~\ref{ex:pairs-in-S}, the statistic
	\[
		M(X_1,X_2)=\bm{1}_{\{X_1+X_2=1\}}
	\]
	is minimal sufficient for $E_0$. Since
	\[
		M(0,0)=M(1,1)=0,
	\]
	taking the same minimal sufficient statistic on both sides and taking $h$ as the identity map gives
	\[
		p_A\in S.
	\]

	We now show that $p_A\notin C$. Since both inference bases have the same model $E_0$, a direct conditioning representation would require $E_0$ to be the conditional model of $E_0$ given an ancillary event. But all probabilities in $E_0$ are strictly positive. Therefore conditioning on a proper event changes the support, or equivalently the conditional sample space, and cannot reproduce the original model $E_0$. Hence only the trivial ancillary can reproduce $E_0$. For the trivial ancillary, the ordinary-conditioning case requires the observed values to be equal, but
	\[
		(0,0)\neq (1,1).
	\]
	{The mixed-experiment alternative is also impossible in the
	formal encoding: it would injectively parametrize a proper ancillary fiber
	of the four-point sample space by that same four-point space, contradicting
	finite cardinality. The same argument applies after reversal.}
	Thus
	\[
		p_A\in S\setminus C.
	\]

	Second, consider the pair
	\[
		p_B :=
		\Bigl(
		(E^\star,(0,1)),(E_3,1)
		\Bigr),
	\]
	where $E^\star$ and $E_3$ are the models in Example~\ref{Hierarchical-Example}. The selector $U$ is ancillary for $E^\star$, since
	\[
		\sum_{x:U(x)=0} f_{E^\star,\theta}(x)=\frac12,
		\qquad \theta=1,2.
	\]
	Moreover, conditional on $U=0$, the model $E^\star$ reduces to $E_3$, and the observed values satisfy
	\[
		x_1=(a,x_2)=(0,1),
	\]
	with $a=0$ and $x_2=1$.
	Therefore
	\[
		p_B\in C.
	\]

	However, $p_B\notin S$. The likelihood-proportionality classes, and hence the minimal sufficient partition, for $E^\star$ are
	\[
		\bigl\{
		\{(0,1),(0,3)\},
		\{(0,2)\},
		\{(1,0)\},
		\{(1,1)\}
		\bigr\}.
	\]
	Thus every minimal sufficient statistic for $E^\star$ has four induced points. For $E_3$, the minimal sufficient partition is
	\[
		\bigl\{
		\{1,3\},
		\{2\}
		\bigr\},
	\]
	so every minimal sufficient statistic for $E_3$ has two induced points. Hence there is no bijection between the induced minimal sufficient sample spaces satisfying the definition of $S$. Consequently,
	\[
		p_B\in C\setminus S.
	\]

	We conclude that
	\[
		S\not\subseteq C
		\qquad\text{and}\qquad
		C\not\subseteq S.
	\]
\end{example}
Example~\ref{ex:incomparability-S-C} shows that $S$ and $C$
are genuinely distinct local relations: neither contains the
other. This incomparability illustrates why the Venn diagram
for the relations must show $S$ and $C$ as incomparable
subsets of $L$. (Since both are reflexive, they share all
identity pairs $(I,I)$; hence $S\cap C\neq\varnothing$ even
though neither contains the other.)

We also consider a finite reduced-model version of stable
conditionality \citep{Evans2023}, denoted $SC_{\mathrm{red}}$. Stability is
assessed after reduction by a minimal sufficient statistic; the precise
definition and proofs are given in
\hyperref[reduced-stable-proof]{Appendix~\ref*{appendix}}. We obtain
$S\subseteq SC_{\mathrm{red}}$ and $SC_{\mathrm{red}}\nsubseteq C$
for every non-empty finite $\Theta$; $p_A$ illustrates the latter.
These assertions use our direct relation
$C$ and do not concern stability assessed in the original experiment.

For $D\subseteq\mathcal{I}\times\mathcal{I}$, write
$\overline{D}$ for its equivalence closure (the smallest
equivalence relation on $\mathcal{I}$ containing $D$).

For the universe $\mathcal I$ of finite inference bases with $|\Theta|\ge2$,
the relations $L$, $S$, and $C$
have the following properties, corresponding to the relation-level results
of \cite{Evans2013} and illustrated in Figure~\ref{figure10}:

\begin{itemize}
	\item $L$ and $S$ are equivalence relations;
	\item $C$ is reflexive and symmetric, but not transitive;
	\item $\overline{C}=L$;
	\item $C\cup S\subsetneq L=\overline{C\cup S}$;
	\item since $S$ is an equivalence relation, $\overline S=S\subsetneq L$;

\end{itemize}

For the strict inclusion $C\cup S\subsetneq L$, established by \cite{Evans2013}, the
$E_5/E_6$ counterexample in Example~\ref{preservs_S} provides a different explicit case and extends to any finite $\Theta$ with $|\Theta|\ge2$ by repeating its two probability columns, using each at least once. This does not assert that arbitrary models reduce to two distributions. The closure identities and $C\subsetneq L$ hold for every non-empty finite $\Theta$.


For $|\Theta|\ge2$, our Example~\ref{ex:incomparability-S-C} shows that $S\not\subseteq C$ and $C\not\subseteq S$, that is, the sufficiency relation $S$ and the conditionality relation $C$ capture distinct aspects of statistical evidence: neither contains the other, yet both sit strictly inside the likelihood relation $L$ (see Figure \ref{figure10}). More importantly, $L$ emerges naturally as the equivalence closure of $C$ alone and of $C \cup S$ together:
\[
	\overline{C} = L \qquad\text{and}\qquad \overline{C \cup S} = L.
\]
The \hyperref[finite-closure-proof]{finite closure argument in
Appendix~\ref*{appendix}} constructs a chain of four $C$-steps joining any $L$-related pair. Thus two inference bases have proportional likelihoods if and only if they can be joined by a finite chain of $C$-steps; allowing $S$-steps as well gives the same class of endpoint pairs. Figure~\ref{figure10} illustrates this relational structure.

\begin{figure}[htbp!]
	\centering
	\includegraphics[width=10cm]{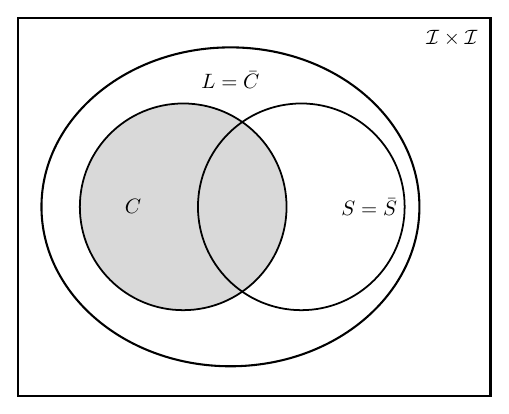}
	\caption{{Venn diagrams for the finite coded relations $L$, $S$ and $C$ used here, corresponding to Evans' framework.} $\overline{C}$ and $\overline{S}$ denote the equivalence closure of $C$ and $S$, respectively, following \cite{Evans2013}. {The proper containments and incomparability displayed here hold for $|\Theta|\ge2$.}}
	\label{figure10}
\end{figure}

\cite{Evans2013} showed that this closure property has a direct inferential consequence: equality on the local conditionality relation is mathematically equivalent to equality on the full likelihood relation. The sufficiency relation, by contrast, {when $|\Theta|\ge2$,} does not generate $L$ on its own; its equivalence closure remains strictly smaller.

While Evans' identities describe how inference bases relate to one another, statistical practice ultimately depends on the \textit{procedures} we apply to those bases. This paper shifts the focus from relations on $\mathcal{I}\times\mathcal{I}$ to the classes of procedures that respect them.  The identities above will translate, in Section~\ref{Class}, into a {nesting} of procedure classes; common methods, e.g., maximum likelihood estimators, likelihood-ratio $p$-values, Bayesian posteriors, frequentist confidence regions, then fit into this framework as concrete instances.

\subsection{Statistical principles}

The connection between Evans' relations and Birnbaum's principles can be expressed by means of the notation $Ev(\cdot)$ for the evidential output attached to each inference base. Here the notation $Ev(I)$ is used only symbolically to denote the output associated with the inference base $I$; no functional structure is being assumed at this stage. The precise relation-valued formalization is introduced in Section~\ref{Class}. With this notation, Birnbaum's LP can be written in terms of Evans' construal as follows:
\begin{equation}\label{LP_INTER}
	\mbox{LP}: \ ``\mbox{if }  (I_1,I_2)\in L, \mbox{ then } Ev(I_1)=Ev(I_2)",
\end{equation}
where equality means equality of the outputs associated with $I_1$ and $I_2$. In Section~\ref{Class}, once $Ev$ is formalized as a relation, this becomes equality of the corresponding output sets.

Analogously, the {SP and CP} can be written respectively as follows:
\begin{equation}\label{SP_INTER}
	\mbox{SP}: \ ``\mbox{if }  (I_1,I_2)\in S, \mbox{ then } Ev(I_1)=Ev(I_2)",
\end{equation}
and
\begin{equation}\label{CP_INTER}
	\mbox{CP}: \ ``\mbox{if }  (I_1,I_2)\in C, \mbox{ then } Ev(I_1)=Ev(I_2)",
\end{equation}
Furthermore, Birnbaum's theorem {\eqref{BT0}} can be reformulated in terms of Evans' relations as
\begin{equation}\label{BT0-Evans}
	{\underbrace{``\mbox{LP} \Longleftrightarrow (\mbox{CP }\& \mbox{ SP})\mbox{ for every procedure }Ev"}_{\mbox{Birnbaum}} \quad  \mbox{if, and only if, } \quad \underbrace{``L = \overline{\,C \cup S\,} \ "}_{\mbox{Evans}}.}
\end{equation}
{Here the equivalence quantifies over all relation-valued
procedures with a fixed non-empty codomain, and $L$ is an equivalence relation.}

The logical connective $``\&"$ in Birnbaum's statement imposes more restrictions on the evidential meaning than it may seem at first sight. The equality must hold not only for all pairs in $C$ or in $S$, but for all pairs in the smallest equivalence relation containing $C\cup S$. This is the closure step where transitivity enters: $\overline{C\cup S}$ is the demand that equality propagate along finite chains of direct $C$- and $S$-steps. Evans' reformulation in~{\eqref{BT0-Evans}} makes explicit a feature implicit in Birnbaum's original formulation: jointly adopting CP and SP commits us not only to the local relations but to their transitive closure.

Some authors have argued that a frequentist reading of the conditionality and sufficiency principles is more naturally formulated in terms of partial orders on evidential outputs rather than equivalence relations \citep{Holm1985,BN1995}: conditioning would then refine, rather than preserve, the evidential content of an inference base. The present framework is developed under the equivalence-based reading consistent with \cite{Birnbaum1962} and \cite{Evans2013}; the order-based variant is a natural direction for extending the present procedure-class framework and we leave its development to future work.

\section{Classes of Statistical Procedures in Evans' construal} \label{Class}

According to \cite{BW1988}, p.~197, the evidential meaning $Ev(\cdot)$ generated by a statistical procedure encompasses any collection of conclusions or reports{. In our framework, examples include} p-values, Bayes factors, s-values, e-values, confidence intervals, posterior probabilities, and so on. This collection may depend on hypotheses regarding the parameter space or on supplementary components such as prior distributions. The equality $Ev(I_1)=Ev(I_2)$ means that the entire set of conclusions for $I_1$ coincides with that for $I_2$.
{For broader treatments of likelihood, Bayesian inference, and
statistical evidence, see \citep{Pawitan2001,Robert2007,Thompson2007}; examples
of confidence-set and evidence-report constructions include
\citep{BickelPatriota2019,Patriota2013,PereiraStern}.}


In general, a procedure associates each inference base $(E,x)$ with a set of
inferential outputs or features, for example maximum-likelihood estimates,
likelihood-based summaries, $p$-values, confidence or credibility regions, or
tuples of such outputs. We formalize a statistical procedure as a binary
relation between the set of inference bases $\mathcal I$ and a fixed codomain
$\mathcal A$. When $\mathcal A=\mathcal I$, an output may itself be an
inference base, recovering Evans' relation-valued setting; ordinary functions,
such as estimators or $p$-values, are included as graph relations.

\begin{definition}\label{procedure}
	{Let $\mathcal A$ be a set.} A statistical procedure is a relation
	\[
		Ev\subseteq \mathcal I\times \mathcal A.
	\]
	For each $I\in\mathcal I$, define the output set
	\[
		Ev(I):=\{a\in\mathcal A:(I,a)\in Ev\}.
	\]
	Thus $Ev$ may be regarded as a set-valued map on $\mathcal I$. {After identifying each ordinary function $f:\mathcal I\to\mathcal A$ with its graph relation $\{(I,f(I)):I\in\mathcal I\}$, ordinary} functions are exactly those procedures for which $|Ev(I)|=1$ for every $I\in\mathcal I$. The class of all statistical procedures with codomain $\mathcal A$ is denoted by $2^{\mathcal I\times \mathcal A}$.
	
	No totality condition is imposed: the output set $Ev(I)$ is permitted
	to be empty. This convention is used in Lemma~\ref{lemma-b} and is
	why the strict inclusion in Theorem~\ref{BT1}(b) requires only
	$\mathcal A\neq\varnothing$ on the codomain.
	
\end{definition}

We now present several statistical procedures that are functions.

\begin{example}\label{thetaprocedure_estimatives}
	\normalfont

	In this example, we assume that $\Theta \subseteq \mathbb{R}$. In the case of $\mathcal A=2^{\mathbb{R}}$, examples of statistical procedures in $2^{\mathcal I\times 2^{\mathbb{R}}}$ are set-valued estimators of $\theta$ defined on $\mathcal I$. We revisit two familiar estimators as concrete instances of the framework.

	Firstly, we define the maximum likelihood set estimates for $\theta$ as the function {$Ev_{\mathrm{ml}}:\mathcal{I} \to 2^{\Theta}\subseteq 2^{\mathbb{R}}$} by
	\begin{equation}\label{MLE}
		Ev_{\mathrm{ml}}\left( E,x \right) = \left\{\theta_x \in \Theta: \ f_{E,\theta_x}(x) =
		\max_{\theta \in \Theta} f_{E,\theta}(x) \right\}.
	\end{equation}

	The set $Ev_{\mathrm{ml}}\left( E,x \right)$ {in~\eqref{MLE}} contains all values of the parameter space $\Theta$ that maximize the likelihood function $f_{E,\bullet}(x)$\footnote{Under the standing assumption that $\Theta$ is finite, the set $Ev_{\mathrm{ml}}(E,x)$ is always non-empty; in more general settings it should be interpreted as the set of all maximizers whenever they exist.}.

	Secondly, consider the class $\mathcal{H}_E$ of all unbiased estimators of $\theta$ given by:
	\[\mathcal{H}_E = \left\{h:\mathcal{X}_E\to \mathbb{R}: \sum_{x \in \mathcal{X}_E} h(x)f_{E,\theta}(x) = \theta, \ \forall \theta \in \Theta\right\}\]
	and the class

	\[
		\mathcal{T}_E =
		\left\{
		\tilde h \in \mathcal{H}_E :
		\sum_{x\in \mathcal{X}_E} (\tilde h(x)-\theta)^2 f_{E,\theta}(x)
		\le
		\sum_{x\in \mathcal{X}_E} (h(x)-\theta)^2 f_{E,\theta}(x),
		\; \forall h\in \mathcal{H}_E,\ \forall \theta\in\Theta
		\right\}.
	\]

	We define the set of uniformly minimum-variance unbiased estimates (UMVUE's) as the function $ Ev_{\mathrm{umvue}}:\mathcal{I} \to 2^{\mathbb{R}}$  by
	\begin{equation} \label{UMVUE}
		Ev_{\mathrm{umvue}}(E,x) = \left\{ h(x):  h\in \mathcal{T}_E \right\} \subseteq\mathbb R.
	\end{equation}
	{The set $Ev_{\mathrm{umvue}}(E,x)$ in~\eqref{UMVUE} contains the realized
	values $h(x)$ of all uniformly minimum-variance unbiased estimators
	$h\in\mathcal T_E$ for the model $E$.}
\end{example}

\begin{example}\label{pvalue_and_posterior}
	\normalfont
	Let $\mathcal{A}=[0,1]$. For a fixed partition of $\Theta$ given by {non-empty sets $\Theta_0$ and $\Theta_1$ with $\Theta_0\cap\Theta_1=\varnothing$ and $\Theta_0\cup\Theta_1=\Theta$}, consider the p-value function $Ev_{\text{p-value}}: \mathcal{I} \to [0,1]$ given by:
	\begin{equation*}
		Ev_{\text{p-value}}(E,x):= \max_{\theta \in \Theta_0}\sum_{y\in A_x}f_{E,\theta}(y),
	\end{equation*}
	where
	$A_x:= \{ y \in \mathcal{X}_E: \operatorname{LR}(E,y,\Theta_0)
		\leq \operatorname{LR}(E,x,\Theta_0) \}$ and
	\begin{equation}\label{LR}
		\operatorname{LR}(E,z,\Theta_0):=  \frac{\max_{\theta \in \Theta_0}f_{E,\theta}(z)}{\max_{\theta \in \Theta_1}f_{E,\theta}(z)}.
	\end{equation}
	The ratio is interpreted in the extended sense: $a/0=\infty$ for $a>0$, and $0/0=0$.

	The function $Ev_{\text{p-value}}$ is often applied to decide whether the null hypothesis $H_0:\theta \in \Theta_0$ should be rejected, for $\Theta_0 \subseteq \Theta$ and a fixed significance level $\alpha \in (0,1)$. Typically, $H_0$ is rejected when $Ev_{\text{p-value}}(E,x) < \alpha$.
	{This is the lower-tail $p$-value based on the null-to-alternative generalized
	likelihood ratio defined in~\eqref{LR}; we record this specific construction
	for use in Example~\ref{preservs_S}.}
\end{example}

\begin{example}\label{posterior-procedure}
	Let $\mathcal{A}=[0,1]$ and $\pi: \Theta \to [0,1]$ be a marginal  probability function (prior) on $\Theta$. For a fixed $\Theta_0 \subset \Theta$, consider the posterior probability function $Ev_{\pi}: \mathcal{I} \to [0,1]$ of ``$\theta \in \Theta_0$''  given by:
	\begin{equation}\label{posterior_procedure}
		Ev_{\pi}(E,x) = \sum_{\theta \in \Theta_0} \pi(\theta|x),
	\end{equation}
	where
	\begin{equation}\label{posterior_function}
		\pi(\theta \mid x) = \frac{f_{E,\theta}(x)\pi(\theta)}{\sum_{\theta \in \Theta}  f_{E,\theta}(x)\pi(\theta) },
	\end{equation}
	{The procedure in~\eqref{posterior_procedure} uses the
	posterior mass function in~\eqref{posterior_function},}
	assuming that
	\[
		\sum_{\theta\in\Theta} f_{E,\theta}(x)\pi(\theta)>0
	\]
	for every inference base $(E,x)\in\mathcal I$. This is guaranteed, for
	example, if $\pi(\theta)>0$ for every $\theta\in\Theta$ and, for every
	$(E,x)\in\mathcal I$, at least one value of $\theta$ satisfies
	$f_{E,\theta}(x)>0$.

	Note that if $(I_1, I_2) \in L$, then, for some $k>0$, we have
	\begin{equation}\label{proportional_lik_posterior}
		\pi(\theta\mid x_1)
		=
		\frac{f_{E_1,\theta}(x_1)\pi(\theta)}
		{\sum_{\theta\in\Theta} f_{E_1,\theta}(x_1)\pi(\theta)}
		=
		\frac{k\,f_{E_2,\theta}(x_2)\pi(\theta)}
		{\sum_{\theta\in\Theta} k\,f_{E_2,\theta}(x_2)\pi(\theta)}
		=
		\pi(\theta\mid x_2), \ \forall \theta\in\Theta.
	\end{equation}

	Consequently, $Ev_{\pi}(I_1)= Ev_{\pi}(I_2)$ for every $(I_1, I_2)\in L$ and a fixed $\Theta_0 \subset \Theta$. {Equality~\eqref{proportional_lik_posterior} shows this invariance, which} follows directly from the proportionality of the likelihoods \citep{BW1988} and is included as a Bayesian instance of $\mathcal{G}_L^{(\mathcal{A})}$.
\end{example}

The next examples illustrate {relation-valued statistical procedures}.

\begin{example}\citep{Evans2013} \label{Evans_proc}
	\normalfont
	Taking the codomain to be $\mathcal{A} = \mathcal{I}$, the statistical relations $L$, $S$, and $C$ are themselves examples of statistical procedures. As binary relations on $\mathcal{I}$, they are elements of the class $2^{\mathcal{I}\times \mathcal{I}}$. Thus Evans' original relations are recovered as the case $\mathcal{A}=\mathcal{I}$ of Definition~\ref{procedure}.
\end{example}

\begin{example} \label{bayes-procedure}
	Consider $\mathcal{A}= \mathcal{L}_{\Theta}\times \Pi_{\Theta}$, where $\mathcal{L}_{\Theta}$ is the set of all real-valued functions on $\Theta$ and $\Pi_\Theta$ is the {set} of probability functions on $\Theta$. The ``Bayesian inference information'' is the relation $Ev_B \subseteq \mathcal I \times \mathcal A$ defined by
	\[
		((E,x),(g,\pi)) \in Ev_B
		\quad \Longleftrightarrow \quad
		g=c\,f_{E,\bullet}(x)\ \text{for some } c>0,
		\ \text{and } \pi\in \Pi_\Theta.
	\]
	Equivalently, for each inference base $(E,x)$, the relation $Ev_B$ associates every pair
	$(c f_{E,\bullet}(x),\pi)$, where $c$ is a positive constant and $\pi$ is a prior probability function on $\Theta$.

	Note that most Bayesian inference methods based on $(E,x)$ are implemented using $(f_{E,\bullet}(x),\pi(\bullet))$.

\end{example}

\begin{example} \label{frequentist-procedure}
	Consider $\mathcal A=2^\Theta$ and fix $\alpha\in(0,1)$. The confidence-region procedure is the relation $Ev_F \subseteq \mathcal I \times \mathcal A$ defined by
	\[
		((E,x),R)\in Ev_F
	\]
	if and only if $R = CR_{E,\alpha}(x)$ for some confidence-region procedure $CR_{E,\alpha}:\mathcal X_E \to 2^\Theta$ that covers the population parameter $\theta$ with probability at least $1-\alpha$, i.e.,
	\[
		\sum_{z\in \mathcal X_E} f_{E,\theta}(z)\, \bm{1}\{\theta \in CR_{E,\alpha}(z)\} \ge 1-\alpha, \qquad \forall \theta\in\Theta.
	\]
	Thus $Ev_F$ associates each inference base $(E,x)$ with every $1-\alpha$ confidence region produced by a procedure having at least nominal coverage.
\end{example}

{The procedures $Ev_B$ and $Ev_F$ in
Examples~\ref{bayes-procedure} and~\ref{frequentist-procedure} are not required
to be functions: their output sets range over prior distributions and
confidence-region constructions, respectively.}

{When Birnbaum's principles~\eqref{LP_INTER},
\eqref{SP_INTER}, and~\eqref{CP_INTER} are read for ordinary functions,} the
expression $Ev(I_1)=Ev(I_2)$ is literal equality in the codomain. For a
relation-valued procedure, the natural analogue is equality of output sets.

\begin{definition}\label{preserving}
	Let $D\subseteq \mathcal I\times\mathcal I$. A statistical procedure $Ev\subseteq \mathcal I\times \mathcal A$ preserves $D$ if
	\[
		(I_1,I_2)\in D \quad\Longrightarrow\quad Ev(I_1)=Ev(I_2).
	\]
\end{definition}

In words, if $Ev$ preserves $D$, then any two inference bases related by $D$ generate identical collections of outputs under $Ev$.

\begin{definition}\label{joint-preserving}
	Let $\mathcal G_D^{(\mathcal A)}$ denote the subclass of $2^{\mathcal I\times\mathcal A}$ consisting of all statistical procedures that preserve $D$. We say that $Ev$ preserves jointly two relations $D_1$ and $D_2$ if
	\[
		Ev\in \mathcal G_{D_1}^{(\mathcal A)}\cap \mathcal G_{D_2}^{(\mathcal A)},
	\]
	equivalently, by Lemma~\ref{lemma-a}(b), if $Ev\in \mathcal G_{D_1\cup D_2}^{(\mathcal A)}$.
\end{definition}

Therefore, $\mathcal{G}_{D}^{(\mathcal{A})}$ contains all statistical procedures that satisfy LP, SP or CP when $D$ is instantiated as $L$, $S$ or $C$, respectively. For example, $\mathcal{G}_{L}^{(2^\Theta)}$ is the class of procedures that respect LP; analogously, $\mathcal{G}_{S}^{(2^\Theta)}$ and $\mathcal{G}_{C}^{(2^\Theta)}$ correspond to SP and CP.

The following theorem is the procedure-level analogue of Evans' closure identities: it characterizes the class of procedures that satisfy each principle and makes precise the sense in which CP and LP impose the same invariance, while SP imposes a strictly weaker one {when $|\Theta|\ge2$ and $\mathcal A\ne\varnothing$}.

Theorem~\ref{BT1} below collects the main relations among $\mathcal{G}_L^{(\mathcal{A})}$, $\mathcal{G}_C^{(\mathcal{A})}$ and $\mathcal{G}_S^{(\mathcal{A})}$; proofs are deferred to Appendix~\ref{appendix}.

\begin{theorem} \label{BT1}
	{For every non-empty finite $\Theta$ and the relations $L$, $S$, and $C$ on $\mathcal I$,} (a) $\mathcal{G}_L^{(\mathcal{A})}=\mathcal{G}_C^{(\mathcal{A})}$; (b) $\mathcal{G}_L^{(\mathcal{A})} \mathrel{{\subseteq}} \mathcal{G}_S^{(\mathcal{A})}${, with strict inclusion exactly when $|\Theta|\ge2$ and $\mathcal A\ne\varnothing$}; (c) $\mathcal{G}_{L}^{(\mathcal{A})}= \mathcal{G}_{C}^{(\mathcal{A})} \cap \mathcal{G}_{S}^{(\mathcal{A})}$.
\end{theorem}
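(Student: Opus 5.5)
The whole argument rests on one abstract lemma about preservation classes (the lemmas the paper cites as Lemma~\ref{lemma-a} and Lemma~\ref{lemma-b}). It has four parts.

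\begin{itemize}
\item[(i)] Antitonicity: if $D_1\subseteq D_2$ then $\mathcal G_{D_2}^{(\mathcal A)}\subseteq\mathcal G_{D_1}^{(\mathcal A)}$.
\item[(ii)] Unions: $\mathcal G_{D_1\cup D_2}^{(\mathcal A)}=\mathcal G_{D_1}^{(\mathcal A)}\cap\mathcal G_{D_2}^{(\mathcal A)}$.
\item[(iii)] Closure invariance: $\mathcal G_{D}^{(\mathcal A)}=\mathcal G_{\overline D}^{(\mathcal A)}$.
\item[(iv)] Separation: if $\mathcal A\neq\varnothing$ and $(I_1,I_2)\in D'\setminus \overline D$, then some $Ev\in\mathcal G_D^{(\mathcal A)}\setminus\mathcal G_{D'}^{(\mathcal A)}$.
\end{itemize}

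Parts (i) and (ii) are immediate from Definition~\ref{preserving}. For (iii), the relation $\{(I,J):Ev(I)=Ev(J)\}$ is an equivalence relation. If $Ev$ preserves $D$, this relation contains $D$, hence contains $\overline D$; the reverse inclusion follows from (i). Equivalently, equality of output sets propagates along finite chains of $D$-steps. For (iv), fix $a\in\mathcal A$ and set $Ev=\{(I,a): I\in[I_1]_{\overline D}\}$, so that $Ev(I)=\{a\}$ on the $\overline D$-class of $I_1$ and $Ev(I)=\varnothing$ elsewhere. This preserves $D$ because it is constant on $\overline D$-classes. It fails to preserve $D'$ because $I_2\notin[I_1]_{\overline D}$. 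Here the permission for empty output sets in Definition~\ref{procedure} is exactly what is used.

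With the lemma in hand, (a) and (c) follow from the relation-level facts of Section~\ref{Evans-construal}. For (a), (iii) gives $\mathcal G_C=\mathcal G_{\overline C}$, and $\overline C=L$ by the finite closure argument (four $C$-steps joining any $L$-pair) for every non-empty finite $\Theta$. For (c), (ii) and (iii) give $\mathcal G_C\cap\mathcal G_S=\mathcal G_{C\cup S}=\mathcal G_{\overline{C\cup S}}=\mathcal G_L$. The non-strict part of (b) is (i) applied to $S\subseteq L$. That inclusion holds because minimal sufficient statistics equal the likelihood-class statistic $T_E$ up to bijection, so $T_1(x_1)=h(T_2(x_2))$ forces proportional observed likelihoods.

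The main obstacle is the ``exactly when'' clause of (b).

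For the strict direction, assume $|\Theta|\ge2$ and $\mathcal A\neq\varnothing$. Apply (iv) with $D=S$, $D'=L$, using an $L$-pair outside $\overline S=S$: the $E_5/E_6$ example (Example~\ref{preservs_S}), with columns repeated to reach arbitrary $|\Theta|\ge2$. This produces an $S$-preserving procedure not in $\mathcal G_L$.

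For the converse, I would handle the two failure cases separately.
\begin{itemize}
\item If $\mathcal A=\varnothing$, then $2^{\mathcal I\times\mathcal A}=\{\varnothing\}$, so both classes equal that singleton.
\item If $|\Theta|=1$, I would show $S=L=\mathcal I\times\mathcal I$. Every likelihood is a positive scalar, because effective supports force $f_{E,\theta}(x)>0$, so $L$ is total. Each $T_E$ is then constant. The reduced models are both the one-point model with mass $1$, and $h$ can be taken as the unique bijection. Hence $S=L$ and the classes coincide.
\end{itemize}
The only delicate points are checking that this one-point reduced model is admissible in $\mathcal I$ (finite, non-empty, with an outcome code), and that the counterexample pair really lies outside $S$ for every $|\Theta|\ge2$. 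The latter holds because the repeated columns preserve the likelihood-class partition, so the induced minimal sufficient spaces still admit no matching bijection.
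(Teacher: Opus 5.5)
Your proposal is correct and follows the paper's proof. Parts (a) and (c) come from closure invariance (Lemma~\ref{FT}) together with $\overline C=L=\overline{C\cup S}$ and Lemma~\ref{lemma-a}(b); part (b) uses antitonicity, the $\{a_0\}$-versus-$\varnothing$ separating procedure of Lemma~\ref{lemma-b} applied to $S\subsetneq L$, and the same two degenerate cases $\mathcal A=\varnothing$ and $|\Theta|=1$, where $S=L=\mathcal I\times\mathcal I$. The differences are cosmetic: you get closure invariance from the equivalence relation $\{(I,J):Ev(I)=Ev(J)\}$ rather than by chaining along Lemma~\ref{bl:1}, and your separation step needs only one witness pair outside $\overline D$ rather than two nested equivalence relations.
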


{If $|\Theta|=1$, then $S=L$ and all three preservation classes coincide.
The strictness in part~(b) for an arbitrary non-empty codomain uses the
permission of empty output sets in Definition~\ref{procedure}.
Example~\ref{preservs_S} supplies a single-valued witness for $\mathcal A=[0,1]$.}

{The proof relies on Lemma~\ref{FT}, together with the closure
identities established in \hyperref[finite-closure-proof]{Appendix~\ref*{appendix}}
for the present finite formulation, corresponding to Theorems~7 and~9 of
\cite{Evans2013}.}

Equivalently, item~(c) follows immediately from items~(a) and~(b):
\[
	\mathcal G_C^{(\mathcal A)}\cap \mathcal G_S^{(\mathcal A)}
	=
	\mathcal G_L^{(\mathcal A)}\cap \mathcal G_S^{(\mathcal A)}
	=
	\mathcal G_L^{(\mathcal A)}.
\]
The proof through $L=\overline{C\cup S}$ is nevertheless useful because it keeps the direct connection with Birnbaum's original formulation in terms of the joint use of CP and SP.

Theorem~\ref{BT1} gives the procedure-level counterpart of Evans' relation-theoretic analysis. Item~(a) says that CP-preserving and LP-preserving procedures coincide:
\[
	\mathcal G_C^{(\mathcal A)}=\mathcal G_L^{(\mathcal A)}.
\]
This is stronger than the bare implication LP~$\Rightarrow$~CP: preserving the local relation $C$ already forces preservation of the full relation $L$. The proof of (a) decomposes into two distinct steps.
\begin{itemize}
	\item[\emph{Step (i).}] \emph{Connectivity.} If $Ev$ preserves $C$ and $(I',I'')\in L\setminus C$, {the closure identity} $\overline C=L$ provides a finite chain $I_0=I',I_1,\dots,I_n=I''$ whose consecutive pairs lie in $C$ (up to symmetry); preservation yields local equalities $Ev(I_j)=Ev(I_{j+1})$. This step uses only the definition of preservation and the symmetry of equality.

	\item[\emph{Step (ii).}] \emph{Transitivity.} The local equalities are chained to the endpoints, $Ev(I')=Ev(I'')$. This step appeals to the transitivity of the equality
	      {relation between the output sets in $2^{\mathcal{A}}$}; once a single global $Ev$ is
	      fixed, it is formally automatic. The substantive question,
	      as \cite{Evans2013} emphasized, is whether the global
	      equality-based representation of local Conditionality that
	      licenses this step is inferentially justified.
\end{itemize}

{The issue highlighted here is whether this equality-based
representation, together with the chosen relations and model universe,
is inferentially appropriate.}

{Suppose $|\Theta|\ge2$ and $\mathcal A\ne\varnothing$.} Item~(b) shows that SP is strictly weaker at the procedure level: every LP-preserving procedure preserves $S$, but not conversely. Item~(c) recovers Birnbaum's joint formulation: requiring both SP and CP is equivalent to requiring LP. Figure~\ref{figure2} depicts these relationships; note that $\mathcal G_C^{(\mathcal A)}$ is strictly contained in $\mathcal G_S^{(\mathcal A)}$, in contrast with the incomparability of $S$ and $C$ in Figure~\ref{figure10}.

\begin{figure}[htp!]
	\centering
	\includegraphics[width=10cm]{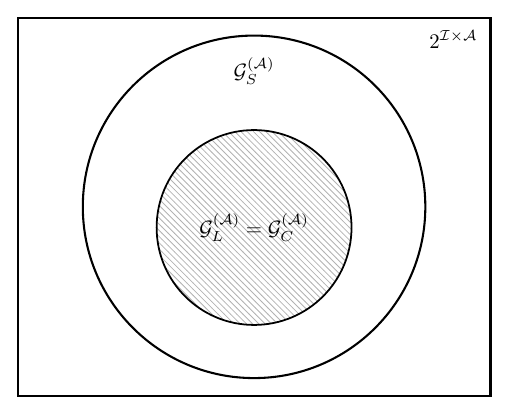}
	\caption{Venn diagram for the classes $\mathcal G_L^{(\mathcal A)}$, $\mathcal G_C^{(\mathcal A)}$ and $\mathcal G_S^{(\mathcal A)}$ (compare with Figure~\ref{figure10}, where $S$ and $C$ are incomparable). {The strict containment displayed here holds for $|\Theta|\ge2$ and $\mathcal A\neq\varnothing$.}}
	\label{figure2}
\end{figure}

To illustrate Theorem~\ref{BT1}, consider the maximum likelihood set estimates $Ev_{\mathrm{ml}}$, the Bayesian procedure $Ev_B$, and the likelihood-ratio p-value $Ev_{\text{p-value}}$, introduced in Examples~\ref{thetaprocedure_estimatives}, \ref{bayes-procedure}, and~\ref{pvalue_and_posterior}, respectively. Since $Ev_{\mathrm{ml}}$ provides equivalent set estimates for any pair of inference bases in $L$, it belongs to $\mathcal{G}_L^{(2^{\Theta})}$ and $\mathcal{G}_C^{(2^\Theta)}$, and therefore to $\mathcal{G}_S^{(2^\Theta)}$ by inclusion. Finally, since $\mathcal{G}_{L}^{(2^\Theta)}= \mathcal{G}_{S}^{(2^\Theta)} \cap \mathcal{G}_{C}^{(2^\Theta)}$, we can state that $Ev_{\mathrm{ml}}$ jointly respects both SP and CP for the case $\mathcal{A}= 2^{\Theta}$. It is well known that $Ev_B \in \mathcal{G}_{L}^{(\mathcal{L}_{\Theta}\times \Pi_{\Theta})}$ by the proportionality of the likelihood and the common parameter space. The likelihood-ratio p-value $Ev_{\text{p-value}}$ exhibits a familiar inferential output that satisfies the sufficiency principle but violates the conditionality principle (equivalently, by Theorem~\ref{BT1}(a), violates the likelihood principle). This makes the strict inclusion $\mathcal{G}_L^{(\mathcal{A})} \subsetneq \mathcal{G}_S^{(\mathcal{A})}$ concrete and illustrates why, in the debate over Birnbaum's theorem, CP carries the substantive force that SP alone does not.

On the other hand, it is well known that $Ev_{\text{p-value}}$ does not respect LP, meaning $Ev_{\text{p-value}} \notin \mathcal{G}_L^{([0,1])}$. The next example shows, however, that $Ev_{\text{p-value}} \in \mathcal{G}_S^{([0,1])}$, so the inclusion in item~(b) is strict when $\mathcal A=[0,1]$.

\begin{example}\label{preservs_S}
		{On the finite effective supports, the Neyman--Fisher factorization for a sufficient statistic $T$ has a strictly positive parameter-free factor and therefore $\operatorname{LR}(E,x,\Theta_0)=\operatorname{LR}(E_T,T(x),\Theta_0)$, where LR is defined in \eqref{LR}}.  Hence, if $(I_1,I_2)\in S$, then there exists a bijection $h : \mathcal X_{E_{T_2}} \to \mathcal X_{E_{T_1}}$ such that $E_{T_1}=E_{h\circ T_2}$ and $T_1(x_1)=h(T_2(x_2))$. For $i=1,2$, define
	\[
		B_i(t):=\{u\in \mathcal X_{E_{T_i}} : \operatorname{LR}(E_{T_i},u,\Theta_0)\le \operatorname{LR}(E_{T_i},t,\Theta_0)\}.
	\]
		{This finite factorization gives the two lower-tail reduction equalities
		$Ev_{\text{p-value}}(E_i,x_i)=
		Ev_{\text{p-value}}(E_{T_i},T_i(x_i))$. Transport
		under the displayed relabelling of the reduced experiments then gives
		the final $p$-value equality.}
	By the factorization theorem,
	\[
		\operatorname{LR}(E_i,y,\Theta_0)=\operatorname{LR}(E_{T_i},T_i(y),\Theta_0),
		\qquad y\in \mathcal X_{E_i},
	\]
	and, therefore, recalling the definition of $A_x$ from Example~\ref{pvalue_and_posterior},
	\[
		A_{x_i}=T_i^{-1}\bigl(B_i(T_i(x_i))\bigr), \qquad i=1,2.
	\]
	Since $E_{T_1}=E_{h\circ T_2}$, we have
	\[
		\operatorname{LR}(E_{T_1},h(u),\Theta_0)=\operatorname{LR}(E_{T_2},u,\Theta_0),
		\qquad u\in \mathcal X_{E_{T_2}},
	\]
	hence $B_1(T_1(x_1))=h(B_2(T_2(x_2)))$. It follows that, for every $\theta\in\Theta_0$,
	\[
		\sum_{y\in A_{x_1}} f_{E_1,\theta}(y)
		=
		\sum_{t\in B_1(T_1(x_1))} f_{E_{T_1},\theta}(t)
		=
		\sum_{u\in B_2(T_2(x_2))} f_{E_{T_2},\theta}(u)
		=
		\sum_{y\in A_{x_2}} f_{E_2,\theta}(y).
	\]
	Taking the maximum over $\theta\in\Theta_0$, we obtain
	\[
		Ev_{\text{p-value}}(I_1)=Ev_{\text{p-value}}(I_2),
	\]
	that is, $Ev_{\text{p-value}}\in \mathcal{G}_S^{([0,1])}$.

	On the other hand, let $\Theta = \{\theta_1, \theta_2\}$ and probability functions in Table \ref{tabla_exemplo}.
	\begin{table}[!htbp]
		\centering
		\caption{Probability functions for $E_5$ and $E_6$.}		\label{tabla_exemplo}
		\vspace{0.5cm}
		\begin{tabular}{|c|c|c|}\hline
			$X_5$ & $f_{E_5,\theta_1}$ & $f_{E_5,\theta_2}$ \\ \hline
			$x$   & 0.1                & 0.2                \\ \hline
			$y$   & 0.9                & 0.8                \\ \hline
		\end{tabular}
		\quad
		\begin{tabular}{|c|c|c|}\hline
			$X_6$ & $f_{E_6,\theta_1}$ & $f_{E_6,\theta_2}$ \\ \hline
			$u$   & 0.2                & 0.4                \\ \hline
			$v$   & 0.8                & 0.6                \\ \hline
		\end{tabular}

	\end{table}

	We observe $x$ in $E_5$ and $u$ in $E_6$. The likelihoods satisfy $f_{E_6,\theta}(u) = 2 f_{E_5,\theta}(x)$ for all $\theta \in \Theta$.
	Thus $\bigl((E_5,x),(E_6,u)\bigr)\in L$. In fact, this pair belongs
	to $L\setminus(S\cup C)$. In each two-point experiment the likelihood
	vectors at the two outcomes are not proportional, so every minimal
	sufficient statistic is injective. The unique bijection compatible with
	the observations pairs $x$ with $u$ and $y$ with $v$; equality of the
	induced models would then require $0.1=0.2$ under $\theta_1$. Hence the
	pair is not in $S$.

	Moreover, each experiment has only the trivial ancillary, because the
	probability of either singleton depends on $\theta$. Conditioning on the
	trivial ancillary has denominator one, so a direct $C$-step would again
	require $0.1=0.2$. The mixed-experiment branch is unavailable because
	there is no nontrivial surjective ancillary selector. Hence the pair is
	not in $C$.
	
	For $H_0\!:\theta = \theta_1$, the likelihood ratio regions are $A_x = \{x\}$ (since $\operatorname{LR}(E_5,x, \{\theta_1\})=0.1/0.2=0.5 < \operatorname{LR}(E_5,y,  \{\theta_1\})=0.9/0.8=1.125$) and $A_u = \{u\}$ (since $\operatorname{LR}(E_6,u, \{\theta_1\})=0.2/0.4=0.5 < \operatorname{LR}(E_6,v, \{\theta_1\})=0.8/0.6\approx1.333$). Hence,
	\[
		Ev_{\text{p-value}}(E_5,x) = 0.1 \neq 0.2 = Ev_{\text{p-value}}(E_6,u),
	\]
	so $Ev_{\mathrm{p\text{-}value}}
		\in
		\mathcal G_S^{([0,1])}
		\setminus
		\mathcal G_L^{([0,1])}$.
\end{example}

By Theorem~\ref{BT1}(a), $Ev_{\text{p-value}}\notin\mathcal G_L^{([0,1])}$ is equivalent to $Ev_{\text{p-value}}\notin\mathcal G_C^{([0,1])}$: preservation of $S$ alone does not force preservation of $L$. Theorem~\ref{BT1}(c) then recovers Birnbaum's joint formulation: adding preservation of $C$ to preservation of $S$ leaves exactly the class $\mathcal G_L^{(\mathcal A)}$. Since (a) already gives $\mathcal G_C^{(\mathcal A)}=\mathcal G_L^{(\mathcal A)}$, the intersection in part~(c) is algebraically redundant from the $C$-side, but conceptually useful from the $S$-side: it identifies the additional constraint that eliminates procedures, such as likelihood-ratio p-values, that preserve sufficiency while violating the likelihood principle.

\begin{corollary}\label{BT2}
	{For any non-empty finite parameter space $\Theta$ and the corresponding relations on $\mathcal I$, if} $Ev \in \mathcal{G}_{C}^{(\mathcal{A})}$, then for each $(I_1,I_2) \in L \setminus C$, it holds that
	\[
		Ev(I_1) = Ev(I_2).
	\]
\end{corollary}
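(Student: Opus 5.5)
The plan is to derive the corollary directly from Theorem~\ref{BT1}(a), or, equivalently and more transparently, from the closure identity $\overline{C}=L$ established in the finite closure argument of Appendix~\ref{appendix}, together with the two-step decomposition (connectivity and transitivity) described after Theorem~\ref{BT1}. The short route is immediate: if $Ev\in\mathcal G_C^{(\mathcal A)}$, then Theorem~\ref{BT1}(a) gives $Ev\in\mathcal G_L^{(\mathcal A)}$. Hence $Ev(I_1)=Ev(I_2)$ for every $(I_1,I_2)\in L$, and in particular for every pair in $L\setminus C$. I would state this first and then, for self-containedness, spell out the chain argument that underlies it.

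For the explicit argument, fix $(I_1,I_2)\in L\setminus C$. Since $L=\overline C$, the pair lies in the smallest equivalence relation containing $C$. I would use the standard description of the equivalence closure: $(I,J)\in\overline C$ exactly when there is a finite chain $I=J_0,J_1,\dots,J_n=J$ with each consecutive pair in $C\cup C^{-1}\cup\Delta$, where $\Delta$ is the diagonal. This description holds because the set of chain-connected pairs is an equivalence relation containing $C$, and any equivalence relation containing $C$ contains every such chain. Since $C$ is symmetric and reflexive, each step in fact lies in $C$. The finite closure argument in the appendix even supplies such a chain explicitly, of length four.

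Along the chain, preservation of $C$ gives $Ev(J_k)=Ev(J_{k+1})$ for each $k$. For steps that use $C^{-1}$ I would appeal to the symmetry of equality of output sets in $2^{\mathcal A}$, and diagonal steps are trivial. A simple induction on $n$, using transitivity of set equality, then yields $Ev(J_0)=Ev(J_n)$, that is, $Ev(I_1)=Ev(I_2)$. No totality or single-valuedness of $Ev$ is needed, since only equality of the sets $Ev(I)$ is used.

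There is essentially no obstacle once $\overline C=L$ is available. The only substantive ingredient is that closure identity, which the paper proves in the appendix and which I would assume here. The one point to check is that the chain produced there stays inside the fixed universe $\mathcal I$, so that $Ev$ is defined at every intermediate base. This holds because all auxiliary models are taken in the same $\mathcal I$, as stipulated in Section~\ref{Evans-construal}.
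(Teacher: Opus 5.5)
Your proposal is correct and follows the paper's proof: both get $\mathcal G_C^{(\mathcal A)}=\mathcal G_L^{(\mathcal A)}$ from Lemma~\ref{FT} together with the closure identity $\overline C=L$, and then restrict to pairs in $L\setminus C$. Your short route through Theorem~\ref{BT1}(a) is the same step, and your explicit chain argument simply unpacks the proof of Lemma~\ref{FT}.
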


Figure~\ref{figure3} illustrates Corollary~\ref{BT2} in three panels: (a) a $C$-chain whose endpoints are not $C$-related; (b) the transitive closure $\overline C$ fills the missing link, and $\overline C=L$; (c) any $C$-preserving procedure $Ev$ therefore assigns identical output to all three inference bases.

\begin{figure}[htp!]
	\centering
	\includegraphics[width=\textwidth]{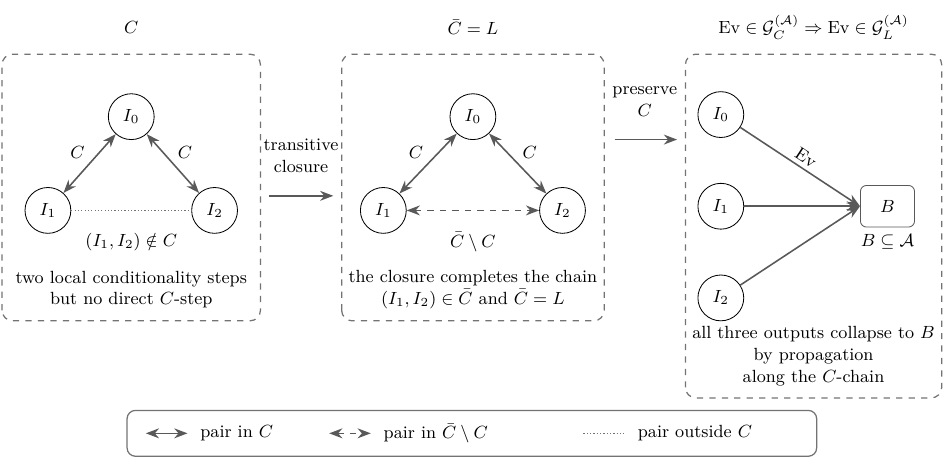}
	\caption{Three-panel illustration of the proof. (a) A $C$-chain $(I_1, I_0)\in C$ and $(I_0,I_2)\in C$; the endpoints are not directly $C$-related. (b) The transitive closure $\overline C$ completes the chain, and $\overline C=L$. (c) A $C$-preserving procedure $Ev$ assigns the same output $B$ to $I_1,I_0,I_2$: $C$-preservation forces $L$-preservation.}\label{figure3}
\end{figure}

\begin{corollary}\label{BT3}
	{For any non-empty finite parameter space $\Theta$ and the corresponding relations on $\mathcal I$, if} $Ev \in \mathcal{G}_{S}^{(\mathcal{A})} \cap \mathcal{G}_{C}^{(\mathcal{A})}$, then for each $(I_1,I_2) \in L \setminus (S \cup C)$, it holds that
	\[
		Ev(I_1) = Ev(I_2).
	\]
\end{corollary}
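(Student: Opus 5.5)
The claim follows directly from Theorem~\ref{BT1}(c), and I will also give the chain argument behind it so the role of $L=\overline{C\cup S}$ is visible.

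\emph{Step 1 (reduction to Theorem~\ref{BT1}(c)).} Let $Ev\in\mathcal G_S^{(\mathcal A)}\cap\mathcal G_C^{(\mathcal A)}$. By Theorem~\ref{BT1}(c), this intersection equals $\mathcal G_L^{(\mathcal A)}$, so $Ev$ preserves $L$. Since $L\setminus(S\cup C)\subseteq L$, we get $Ev(I_1)=Ev(I_2)$ for every pair in $L\setminus(S\cup C)$. Alternatively, Corollary~\ref{BT2} gives the same conclusion, because $L\setminus(S\cup C)\subseteq L\setminus C$.

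\emph{Step 2 (the chain argument).} I will use the closure identity $\overline{C\cup S}=L$ from the finite closure argument in Appendix~\ref{appendix}. Take $(I_1,I_2)\in L\setminus(S\cup C)$. The identity provides a finite sequence $I_1=J_0,J_1,\dots,J_n=I_2$ with each $(J_{k},J_{k+1})$ or its reverse in $C\cup S$. The equivalence closure consists exactly of such zigzag chains, since the reflexive, symmetric and transitive closure of a relation is realized by finite chains.

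For each step, preservation of $C$ or of $S$ (Definition~\ref{preserving}) gives $Ev(J_k)=Ev(J_{k+1})$. This holds in either orientation because set equality is symmetric. Transitivity of equality of output sets in $2^{\mathcal A}$ then yields $Ev(I_1)=Ev(I_2)$.

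Formally, the proof reduces to the lemma that the class $\mathcal G_D^{(\mathcal A)}$ coincides with $\mathcal G_{\overline D}^{(\mathcal A)}$. This holds because the kernel $\{(I,J):Ev(I)=Ev(J)\}$ is an equivalence relation. It therefore contains $D$ if and only if it contains $\overline D$.

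\emph{Main obstacle.} The only substantive ingredient is $\overline{C\cup S}=L$. Here only $\overline C\supseteq L$ is actually needed, and it comes from the explicit four-step $C$-chain construction in the Appendix. That construction must build auxiliary mixture experiments inside $\mathcal I$ that respect the coding conventions on the mixed-experiment branch.

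Since I may assume the Appendix closure results and Theorem~\ref{BT1}, no new difficulty arises. The only points to check are that no totality or non-emptiness of $Ev(I)$ is needed, and that the restriction to $L\setminus(S\cup C)$ is vacuous for the argument. On $S\cup C$ itself, equality already holds by direct preservation.
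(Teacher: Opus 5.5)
Your proposal is correct and follows the paper's proof. You obtain $\mathcal G_S^{(\mathcal A)}\cap\mathcal G_C^{(\mathcal A)}=\mathcal G_L^{(\mathcal A)}$ from Lemma~\ref{lemma-a}(b), Lemma~\ref{FT} and $\overline{C\cup S}=L$ (packaged as Theorem~\ref{BT1}(c)), then restrict to $L\setminus(S\cup C)$; your remarks that the equality kernel of $Ev$ being an equivalence relation gives Lemma~\ref{FT} directly, and that only $L\subseteq\overline{C}$ is actually needed, are both valid.
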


Corollary~\ref{BT3} is the pointwise version of
Theorem~\ref{BT1}(c): joint preservation of $S$ and $C$ forces
identical outputs even on pairs in $L\setminus(S\cup C)$
(Figure~\ref{figure10}). At the relation level, {when $|\Theta|\ge2$,} $S$ and $C$ are
incomparable (Example~\ref{ex:incomparability-S-C}); at the
procedure level, Theorem~\ref{BT1}(a)--(b) gives{, for $|\Theta|\ge2$ and $\mathcal A\ne\varnothing$,}
\[
	\mathcal{G}_C^{(\mathcal{A})}
	=
	\mathcal{G}_L^{(\mathcal{A})}
	\subsetneq
	\mathcal{G}_S^{(\mathcal{A})},
\]
so every $C$-preserving procedure preserves $S$, but not
conversely (Example~\ref{preservs_S}).

{Although Figure~\ref{figure3} depicts the pure $C$-chain used
for Corollary~\ref{BT2}, it also shows the propagation mechanism:} preservation of $C$
fixes the output on each local $C$-step; Lemma~\ref{FT}
propagates this equality along finite $C$-chains; {the closure
identity} $\overline{C}=L$ converts the propagation into
likelihood preservation. Jointly, Corollary~\ref{BT3} follows
via $\overline{S\cup C}=L$. Example~\ref{Hierarchical-Example}
makes the substantive content explicit: $C$-related bases can
share the same MLE yet induce different sampling
distributions. {The transitivity step then forces evidential
equality across distinct experimental contexts; for a frequentist who regards
the sampling distribution as evidentially relevant, that forced equality is
precisely the objection.}

The relational structure is clear: $C$ is reflexive and symmetric but not transitive; $L$ is its transitive closure. Birnbaum's theorem is the statement that any procedure treating $C$-related bases as evidentially equivalent must, by transitivity, treat all $L$-related bases as equivalent. Once a single global
procedure $Ev$ is fixed, transitivity of equality is automatic. The inferential
issue considered here is whether the global
equality-based representation of local Conditionality that
makes this propagation automatic is inferentially justified.

\section{Concluding remarks}\label{remarks}

We have reformulated Birnbaum's principles as preservation
requirements on statistical procedures
$Ev\subseteq\mathcal{I}\times\mathcal{A}$ for an arbitrary
codomain $\mathcal{A}$, yielding{, for $|\Theta|\ge2$,}
$\mathcal{G}_L^{(\mathcal{A})}
	=
	\mathcal{G}_C^{(\mathcal{A})}
	\subsetneq
	\mathcal{G}_S^{(\mathcal{A})}$
with
$\mathcal{G}_L^{(\mathcal{A})}
	=
	\mathcal{G}_C^{(\mathcal{A})}
	\cap
	\mathcal{G}_S^{(\mathcal{A})}$
for every non-empty $\mathcal{A}$. The relation-level and
procedure-level pictures diverge: $S$ and $C$ are incomparable
as relations, yet the corresponding preservation classes are
nested.

{\cite{Evans2013} established corresponding closure identities
in his relation-level framework. Appendix~\ref{appendix} proves them for the
precise finite relations used here.} We transfer these
identities to the level of procedures, recovering Birnbaum's
theorem in procedure-level form. Within this equivalence-based
framework, the joint formulation reduces to
$L=\overline{C\cup S}$. The force of the theorem lies in the
propagation of evidential equality along chains generated by
$C$ and $S$, independent of the nature of the output.

A composite report (for instance, a point estimate together with a confidence
region and a $p$-value) need not be invariant as a whole under a single
principle. Maximum-likelihood set estimates satisfy
$Ev_{\mathrm{ml}}\in\mathcal G_L^{(2^\Theta)}$; for each fixed prior and
hypothesis satisfying Example~\ref{posterior-procedure}'s positive-normalizer
assumption, $Ev_\pi\in\mathcal G_L^{([0,1])}$. The lower-tail likelihood-ratio
$p$-value in Example~\ref{preservs_S} belongs to
$\mathcal G_S^{([0,1])}\setminus\mathcal G_L^{([0,1])}$.
Confidence-region invariance depends on the construction.
For a product of two reports, preservation of both components implies
preservation of the product; the reverse implication holds when both output
sets are non-empty at every inference base. This clarifies what is at stake
when a practitioner is said to ``adopt'' the likelihood principle.

{The identity
$\mathcal G_D^{(\mathcal A)}=\mathcal G_{\overline D}^{(\mathcal A)}$
holds for any relation $D$ and does not depend on cardinality. The further
identification $\overline C=L$ is proved here for the finite models specified
above. Thus the formal core is a closure phenomenon, while its statistical
interpretation depends on the relations used.}

Several questions remain open: order-based versions of CP and
SP within the procedure-level framework; a finer mapping
between our relation $C$ and the weak and strong conditionality
principles in practice{; and}
necessary and sufficient conditions for joint invariance of
composite reports under specified principles.


\section*{Declaration on the use of generative artificial intelligence}
The authors used GPT-6 Astra to assist with
formal proofs in Lean and cross-checking the manuscript against the formalization. The correspondence with the manuscript is author-audited. The authors remain responsible for the article and its accompanying code.

\newpage
\appendix

\section{Technical results}\label{appendix}

In this appendix, we present the auxiliary results and the proofs of the lemmas and theorems discussed in the paper. The notation $\overline{D}$ for the equivalence closure of $D$ is introduced in Section~\ref{Evans-construal}. The first lemma follows from \cite[Lemma~1]{Evans2013}, applied to $D\cup\mathcal I_{id}$; the second is \cite[Chapter~2, Lemma~4.4(b)]{HT1999}. We omit their proofs. For general background on equivalence relations and their closures, see also \cite{Foldes1994}.

\begin{lemma}\label{bl:1}
	Let $\mathcal{I}_{id} \subseteq \mathcal{I}\times \mathcal{I}$, such that $(I_1,I_2) \in \mathcal{I}_{id}$ if, and only if, $I_1=I_2$.  The smallest equivalence relation that contains $D$ is the set $\bar{D} \subseteq \mathcal{I}\times \mathcal{I}$, where  $(I',I'')\in \bar{D}$ if, and only if, there exist {$n\geq 1$ and} $I_1,\ldots,I_n$ such that $I_1:=I', I_n:=I''$ and $(I_j,I_{j+1})\in D \cup \mathcal{I}_{id}$ or $(I_{j+1},I_j)\in D \cup \mathcal{I}_{id}$, for each $j \in\{1,\ldots, n-1\}$.
\end{lemma}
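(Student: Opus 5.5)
Let $R$ denote the set of pairs $(I',I'')$ described in the statement, i.e.\ those joined by a finite zig-zag chain $I'=I_1,\dots,I_n=I''$ with each consecutive pair in $D\cup\mathcal I_{id}$ in one direction or the other. I will show two things: first, that $R$ is an equivalence relation containing $D$; second, that $R$ is contained in every equivalence relation containing $D$. Together these identify $R$ with $\bar D$, the smallest such relation (which exists because $\mathcal I\times\mathcal I$ is an equivalence relation containing $D$ and intersections of equivalence relations are equivalence relations).

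\emph{Step 1: $R$ contains $D$ and is an equivalence relation.} If $(I',I'')\in D$, take $n=2$. Reflexivity uses $n=1$ (the empty chain), or alternatively $n=2$ with the step $(I',I')\in\mathcal I_{id}$. For symmetry, reverse the chain: the condition on each step is already symmetric, since either orientation is allowed. For transitivity, concatenate a chain from $I'$ to $I''$ with a chain from $I''$ to $I'''$, identifying the shared endpoint; the length of the combined chain is $n+m-1$. Each step of the concatenation is a step of one of the two original chains, so it still lies in $D\cup\mathcal I_{id}$ in some orientation.

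\emph{Step 2: minimality.} Let $Q$ be an equivalence relation with $D\subseteq Q$. Reflexivity of $Q$ gives $\mathcal I_{id}\subseteq Q$, so every step $(I_j,I_{j+1})$ lies in $Q$, using symmetry of $Q$ when the step is taken in the reverse orientation. Induction on $j$, using transitivity of $Q$, then gives $(I_1,I_j)\in Q$ for all $j$, and in particular $(I',I'')\in Q$. Hence $R\subseteq Q$.

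Everything here is routine. The only point needing care is the bookkeeping of indices in the concatenation step, together with the degenerate case $n=1$. In that case the chain has no steps, so I must check that the statement's convention really yields $I'=I''$, and that this case is covered consistently by both Steps~1 and~2.
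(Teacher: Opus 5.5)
Your proof is correct and takes essentially the paper's route: the paper omits the proof and obtains the lemma from Lemma~1 of Evans (2013) applied to the reflexive relation $D\cup\mathcal I_{id}$, and your two inclusions are exactly what that cited result amounts to ($R$ is an equivalence relation containing $D$, and $R$ lies inside every such relation). The $n=1$ point you flag is immediate: the conventions $I_1:=I'$ and $I_n:=I''$ force $I'=I''$ when $n=1$, and the corresponding base case $(I_1,I_1)\in Q$ of your induction is just reflexivity of $Q$.
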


\begin{lemma}\label{bl:3}
	Let $[I_0]_{\bar{D}}:=\{I \in \mathcal{I}: (I_0,I)\in \bar{D} \}$, then $(I_1,I_2) \notin \bar{D}$ if, and only if, $[I_1]_{\bar{D}} \cap [I_2]_{\bar{D}}= \emptyset$.
\end{lemma}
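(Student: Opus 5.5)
The statement to prove is Lemma~\ref{bl:3}: for the equivalence closure $\bar D$, we have $(I_1,I_2)\notin\bar D$ if and only if $[I_1]_{\bar D}\cap[I_2]_{\bar D}=\emptyset$. The plan is to prove the contrapositive form
\[
(I_1,I_2)\in\bar D \iff [I_1]_{\bar D}\cap[I_2]_{\bar D}\neq\emptyset,
\]
using only that $\bar D$ is an equivalence relation on $\mathcal I$. Lemma~\ref{bl:1} guarantees this, or it follows directly from the definition of $\bar D$ as the smallest equivalence relation containing $D$. No statistical structure of $\mathcal I$ is needed, so the argument is purely order-theoretic.

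For the forward direction, assume $(I_1,I_2)\in\bar D$. By reflexivity, $(I_2,I_2)\in\bar D$, so $I_2\in[I_2]_{\bar D}$. The hypothesis itself says $I_2\in[I_1]_{\bar D}$. Hence $I_2$ lies in the intersection, which is therefore non-empty.

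For the reverse direction, assume some $I$ lies in $[I_1]_{\bar D}\cap[I_2]_{\bar D}$, that is, $(I_1,I)\in\bar D$ and $(I_2,I)\in\bar D$. Symmetry gives $(I,I_2)\in\bar D$. Transitivity applied to $(I_1,I)$ and $(I,I_2)$ then gives $(I_1,I_2)\in\bar D$. Negating both sides of the equivalence yields the statement as written.

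There is no real obstacle here. The only point needing care is that the class $[I_0]_{\bar D}$ is defined with $I_0$ in the first coordinate. That is why symmetry is invoked explicitly in the reverse direction and reflexivity in the forward direction, rather than assuming classes are trivially symmetric.
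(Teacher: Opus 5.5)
Your proof is correct and is the standard argument: the paper gives no proof of its own, deferring to the textbook lemma it cites, and that lemma is proved exactly as you do. You show that classes of the equivalence relation $\bar D$ meet if and only if their representatives are related, using reflexivity in one direction and symmetry plus transitivity in the other, and then negate both sides.
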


We employed Lemma~\ref{bl:1} to attain Lemma~\ref{FT}.

The following lemma, on which the main lemmas and theorems are based, provides two basic implications among classes of preserving statistical procedures.


\begin{lemma}\label{lemma-a}
	(a)   $D_1 \subseteq D_2$ implies  $\mathcal{G}_{D_2}^{(\mathcal{A})} \subseteq  \mathcal{G}_{D_1}^{(\mathcal{A})}$ and (b) $\mathcal{G}_{D_1 \cup D_2}^{(\mathcal{A})} = \mathcal{G}_{D_1}^{(\mathcal{A})}\cap \mathcal{G}_{D_2}^{(\mathcal{A})}$.
\end{lemma}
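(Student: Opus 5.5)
Both parts follow by unfolding Definitions~\ref{preserving} and~\ref{joint-preserving}. Membership $Ev\in\mathcal G_D^{(\mathcal A)}$ is the universally quantified statement ``for all $(I_1,I_2)\in D$, $Ev(I_1)=Ev(I_2)$''. We therefore argue elementwise, for an arbitrary procedure $Ev\subseteq\mathcal I\times\mathcal A$, and use no property of $L$, $S$ or $C$: the lemma holds for arbitrary relations $D_1,D_2\subseteq\mathcal I\times\mathcal I$ and an arbitrary codomain $\mathcal A$, including the empty one.

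For (a), assume $D_1\subseteq D_2$ and $Ev\in\mathcal G_{D_2}^{(\mathcal A)}$. Take any $(I_1,I_2)\in D_1$. Then $(I_1,I_2)\in D_2$ by inclusion, so preservation of $D_2$ gives $Ev(I_1)=Ev(I_2)$. Since the pair was arbitrary, $Ev$ preserves $D_1$, that is, $Ev\in\mathcal G_{D_1}^{(\mathcal A)}$. In short, the map $D\mapsto\mathcal G_D^{(\mathcal A)}$ is antitone because enlarging $D$ only adds hypotheses that must be satisfied.

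For (b), we prove two inclusions.

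\emph{The inclusion $\mathcal G_{D_1\cup D_2}^{(\mathcal A)}\subseteq\mathcal G_{D_1}^{(\mathcal A)}\cap\mathcal G_{D_2}^{(\mathcal A)}$.} This is immediate from (a) applied twice, to $D_1\subseteq D_1\cup D_2$ and to $D_2\subseteq D_1\cup D_2$.

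\emph{The inclusion $\mathcal G_{D_1}^{(\mathcal A)}\cap\mathcal G_{D_2}^{(\mathcal A)}\subseteq\mathcal G_{D_1\cup D_2}^{(\mathcal A)}$.} Let $Ev$ preserve both $D_1$ and $D_2$, and take $(I_1,I_2)\in D_1\cup D_2$. Split into cases:
\begin{itemize}
\item if $(I_1,I_2)\in D_1$, preservation of $D_1$ gives $Ev(I_1)=Ev(I_2)$;
\item otherwise $(I_1,I_2)\in D_2$, and preservation of $D_2$ gives the same equality.
\end{itemize}
Hence $Ev\in\mathcal G_{D_1\cup D_2}^{(\mathcal A)}$.

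There is no real obstacle here. The only point needing care is that equality $Ev(I_1)=Ev(I_2)$ means equality of output sets in $2^{\mathcal A}$, possibly empty. The argument never inspects these sets, so the convention in Definition~\ref{procedure} allowing empty outputs plays no role. This is also why the Lean counterpart should be a one-line consequence of the definitions.
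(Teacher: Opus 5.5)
Your proof is correct and follows the paper's argument. Part (a) unfolds preservation along the inclusion, and part (b) proves two inclusions, with the reverse direction a case split on $D_1$ versus $D_2$. The only difference is cosmetic: you get $\mathcal G_{D_1\cup D_2}^{(\mathcal A)}\subseteq\mathcal G_{D_1}^{(\mathcal A)}\cap\mathcal G_{D_2}^{(\mathcal A)}$ by applying (a) twice, whereas the paper repeats the elementwise argument directly.
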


\begin{proof}
	{The class $\mathcal{G}_{D}^{(\mathcal{A})}$ is non-empty for every $D$, since the empty relation preserves every $D$.}

	(a) Assume $D_1\subseteq D_2$ and let $Ev\in\mathcal{G}_{D_2}^{(\mathcal{A})}$.
	If $(I_1,I_2)\in D_1$, then $(I_1,I_2)\in D_2$, and by definition of $\mathcal{G}_{D_2}^{(\mathcal{A})}$ we have $Ev(I_1)=Ev(I_2)$. Hence $Ev$ preserves $D_1$, i.e. $Ev\in\mathcal{G}_{D_1}^{(\mathcal{A})}$. Consequently $\mathcal{G}_{D_2}^{(\mathcal{A})}\subseteq\mathcal{G}_{D_1}^{(\mathcal{A})}$.

	(b) We prove the two inclusions.

	Inclusion $\subseteq$:
	Take $Ev\in\mathcal{G}_{D_1\cup D_2}^{(\mathcal{A})}$. For any $(I_1,I_2)\in D_1$ we have $(I_1,I_2)\in D_1\cup D_2$, so $Ev(I_1)=Ev(I_2)$; thus $Ev$ preserves $D_1$. Similarly $Ev$ preserves $D_2$. Therefore $Ev\in\mathcal{G}_{D_1}^{(\mathcal{A})}\cap\mathcal{G}_{D_2}^{(\mathcal{A})}$.

	Inclusion $\supseteq$:
	Take $Ev\in\mathcal{G}_{D_1}^{(\mathcal{A})}\cap\mathcal{G}_{D_2}^{(\mathcal{A})}$. If $(I_1,I_2)\in D_1\cup D_2$, then either $(I_1,I_2)\in D_1$ or $(I_1,I_2)\in D_2$. In either case preservation of $D_1$ or $D_2$ yields $Ev(I_1)=Ev(I_2)$. Hence $Ev$ preserves $D_1\cup D_2$, i.e. $Ev\in\mathcal{G}_{D_1\cup D_2}^{(\mathcal{A})}$.

	Both inclusions prove the equality. \qedhere
\end{proof}

The following lemma is a stronger version of Lemma~\ref{lemma-a}(a) for equivalence relations; it deals with strict inclusion.

\begin{lemma}\label{lemma-b}
	Assume $\mathcal A\neq\varnothing$. If $\bar D_1$ and $\bar D_2$ are equivalence relations with $\bar D_1\subsetneq \bar D_2$, then
	\[
		\mathcal{G}_{\bar{D}_2}^{(\mathcal{A})} \subsetneq \mathcal{G}_{\bar{D}_1}^{(\mathcal{A})}.
	\]
\end{lemma}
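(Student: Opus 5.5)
The plan has two parts: the non-strict inclusion, and then an explicit procedure witnessing strictness. The non-strict part is immediate. Since $\bar D_1\subseteq\bar D_2$, Lemma~\ref{lemma-a}(a) gives $\mathcal{G}_{\bar{D}_2}^{(\mathcal{A})}\subseteq\mathcal{G}_{\bar{D}_1}^{(\mathcal{A})}$. So the work is to exhibit one procedure that preserves $\bar D_1$ but not $\bar D_2$.

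Because the inclusion $\bar D_1\subsetneq\bar D_2$ is strict, I fix a pair $(I_1,I_2)\in\bar D_2\setminus\bar D_1$. Both relations are equivalence relations, so Lemma~\ref{bl:3} applied to $\bar D_1$ gives $[I_1]_{\bar D_1}\cap[I_2]_{\bar D_1}=\varnothing$. Using $\mathcal A\neq\varnothing$, I pick some $a_0\in\mathcal A$ and define the procedure
\[
Ev:=[I_1]_{\bar D_1}\times\{a_0\}\subseteq\mathcal I\times\mathcal A ,
\]
so that $Ev(I)=\{a_0\}$ when $I\in[I_1]_{\bar D_1}$ and $Ev(I)=\varnothing$ otherwise. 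Definition~\ref{procedure} explicitly permits empty output sets, which is exactly what is needed here.

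The two checks are then as follows.

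\emph{$Ev$ preserves $\bar D_1$.} Take $(J,K)\in\bar D_1$. By symmetry and transitivity of $\bar D_1$, $J\in[I_1]_{\bar D_1}$ holds iff $K\in[I_1]_{\bar D_1}$. Hence $Ev(J)=Ev(K)$, and $Ev\in\mathcal G_{\bar D_1}^{(\mathcal A)}$.

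\emph{$Ev$ fails to preserve $\bar D_2$.} By reflexivity, $I_1\in[I_1]_{\bar D_1}$. By the disjointness above, $I_2\notin[I_1]_{\bar D_1}$, since $I_2\in[I_2]_{\bar D_1}$. So $Ev(I_1)=\{a_0\}\neq\varnothing=Ev(I_2)$, while $(I_1,I_2)\in\bar D_2$. Hence $Ev\notin\mathcal G_{\bar D_2}^{(\mathcal A)}$.

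There is no real obstacle in this argument. The only delicate point is that a single-valued witness would require $|\mathcal A|\ge2$, whereas the hypothesis is only $\mathcal A\neq\varnothing$. The construction above avoids this by using empty output sets, as the remark after Definition~\ref{procedure} anticipates.
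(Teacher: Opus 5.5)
Your proof is correct and follows the paper's argument essentially verbatim. Like the paper, you get the non-strict inclusion from Lemma~\ref{lemma-a}(a), then build the witness that outputs $\{a_0\}$ on the $\bar D_1$-class of one endpoint of a pair in $\bar D_2\setminus\bar D_1$ and $\varnothing$ elsewhere, using Lemma~\ref{bl:3} to separate the endpoints. Your remark that empty output sets are what make $\mathcal A\neq\varnothing$ sufficient matches the paper's comment after Definition~\ref{procedure}.
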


\begin{proof}
	The inclusion $\mathcal{G}_{\bar{D}_2}^{(\mathcal{A})} \subseteq \mathcal{G}_{\bar{D}_1}^{(\mathcal{A})}$ follows from Lemma~\ref{lemma-a}(a), since $\bar{D}_1 \subset \bar{D}_2$.
	Choose $(I',I'') \in \bar{D}_2 \setminus \bar{D}_1$ and choose
	$a_0\in\mathcal A$. Define the relation-valued procedure $Ev'$ by
	\[
		Ev'(I):=
		\begin{cases}
			\{a_0\},     & \text{if } I \in [I']_{\bar{D}_1},    \\
			\varnothing, & \text{if } I \notin [I']_{\bar{D}_1}.
		\end{cases}
	\]
	We first show that $Ev'$ preserves $\bar D_1$. If
	$(I_1,I_2)\in\bar D_1$, then, since $\bar D_1$ is an equivalence relation,
	\[
		I_1\in[I']_{\bar D_1}
		\quad\Longleftrightarrow\quad
		I_2\in[I']_{\bar D_1}.
	\]
	Hence either both $Ev'(I_1)$ and $Ev'(I_2)$ are equal to $\{a_0\}$, or both
	are equal to $\varnothing$. Therefore $Ev'(I_1)=Ev'(I_2)$, and so
	$Ev'\in\mathcal G_{\bar D_1}^{(\mathcal A)}$.

	On the other hand, since $(I',I'')\in \bar D_2\setminus \bar D_1$, we have
	{by Lemma~\ref{bl:3} that
	$[I']_{\bar D_1}\cap[I'']_{\bar D_1}=\varnothing$, and hence}
	$I''\notin[I']_{\bar D_1}$. Thus
	\[
		Ev'(I')=\{a_0\}\neq\varnothing=Ev'(I'').
	\]
	Because $(I',I'')\in\bar D_2$, the procedure $Ev'$ does not preserve
	$\bar D_2$. Hence
	\[
		\mathcal{G}_{\bar{D}_2}^{(\mathcal{A})}
		\subsetneq
		\mathcal{G}_{\bar{D}_1}^{(\mathcal{A})}.
	\]
\end{proof}
The following result is employed to state the Birnbaum-type results discussed in the text.

\begin{lemma}\label{FT}
	$\mathcal{G}_D^{(\mathcal{A})}= \mathcal{G}_{\bar{D}}^{(\mathcal{A})}$.
\end{lemma}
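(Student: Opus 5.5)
The plan is to prove the two inclusions separately. The inclusion $\mathcal{G}_{\bar D}^{(\mathcal A)}\subseteq\mathcal{G}_D^{(\mathcal A)}$ is immediate. By definition $\bar D$ is an equivalence relation containing $D$, so $D\subseteq\bar D$, and Lemma~\ref{lemma-a}(a) applied with $D_1=D$ and $D_2=\bar D$ gives the inclusion at once.

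For the reverse inclusion $\mathcal{G}_D^{(\mathcal A)}\subseteq\mathcal{G}_{\bar D}^{(\mathcal A)}$, fix $Ev\in\mathcal{G}_D^{(\mathcal A)}$ and a pair $(I',I'')\in\bar D$. We use the chain description of $\bar D$ from Lemma~\ref{bl:1}. It supplies $n\ge1$ and $I_1,\dots,I_n$ with $I_1=I'$ and $I_n=I''$, such that for each $j$ either $(I_j,I_{j+1})$ or $(I_{j+1},I_j)$ lies in $D\cup\mathcal I_{id}$. We then show $Ev(I_j)=Ev(I_{j+1})$ for every $j$, by cases:
\begin{itemize}
\item if the pair lies in $\mathcal I_{id}$, then $I_j=I_{j+1}$ and the output sets trivially coincide;
\item if $(I_j,I_{j+1})\in D$, preservation of $D$ gives $Ev(I_j)=Ev(I_{j+1})$ directly;
\item if $(I_{j+1},I_j)\in D$, preservation gives $Ev(I_{j+1})=Ev(I_j)$, and symmetry of equality of subsets of $\mathcal A$ finishes the case.
\end{itemize}

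Finally, an induction on $j$ from $1$ to $n$, using transitivity of equality in $2^{\mathcal A}$, yields $Ev(I_1)=Ev(I_j)$ for all $j$, and in particular $Ev(I')=Ev(I'')$. The base case $n=1$ is trivial since then $I'=I''$. Hence $Ev$ preserves $\bar D$.

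An alternative, which avoids the explicit chains, is to show that the kernel relation $K_{Ev}:=\{(I_1,I_2):Ev(I_1)=Ev(I_2)\}$ is an equivalence relation containing $D$. Minimality of $\bar D$ then forces $\bar D\subseteq K_{Ev}$, which is exactly preservation of $\bar D$. I would actually present this version as the main argument because it is the cleanest. No real obstacle is expected here. The only point needing care is that both arguments rest on Lemma~\ref{bl:1}'s characterization, or equivalently on the definition of $\bar D$ as the smallest equivalence relation, and on the fact that $K_{Ev}$ is an equivalence relation even when some output sets are empty. This holds because equality of sets, including empty ones, is reflexive, symmetric and transitive.
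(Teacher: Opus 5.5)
Your proposal is correct. The chain argument you sketch first is exactly the paper's proof. It gets $\mathcal G_{\overline D}^{(\mathcal A)}\subseteq\mathcal G_D^{(\mathcal A)}$ from Lemma~\ref{lemma-a}(a). It then takes the chain supplied by Lemma~\ref{bl:1} and makes the same three-way case split, which the paper calls ``Step (i), connectivity''. Finally it chains the equalities, which the paper calls ``Step (ii), transitivity''.

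The kernel argument you say you would put first is a genuinely different route. It never uses the chain characterization of Lemma~\ref{bl:1}, which the paper imports from Evans without proof; it uses only the defining minimality of $\overline D$. It also gives both inclusions in one stroke. We have $Ev\in\mathcal G_D^{(\mathcal A)}$ if and only if $D\subseteq K_{Ev}$. Since $K_{Ev}$ is an equivalence relation, $D\subseteq K_{Ev}$ holds if and only if $\overline D\subseteq K_{Ev}$. You do not even need Lemma~\ref{lemma-a}(a) separately.

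What the paper's chain version buys is explicitness. It displays the finite propagation of equality along individual $D$-steps, and the paper leans on that mechanism interpretively: in the Step (i)/(ii) discussion after Theorem~\ref{BT1} and in Figure~\ref{figure3}, it isolates exactly where transitivity enters. Your kernel version makes the same point more abstractly: the whole content of the lemma is that $K_{Ev}$ is transitive, i.e.\ that equality of output sets in $2^{\mathcal A}$ is transitive.
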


\begin{proof}
	Since $D\subseteq \overline D$, Lemma~\ref{lemma-a}(a) gives
	\[
		\mathcal G_{\overline D}^{(\mathcal A)}\subseteq \mathcal G_D^{(\mathcal A)}.
	\]

	Conversely, let $Ev\in \mathcal G_D^{(\mathcal A)}$ and let $(I',I'')\in \overline D$. By Lemma~\ref{bl:1}, there exist $n\ge 1$ and inference bases $I_1,\ldots,I_n$ such that
	\[
		I_1=I',\qquad I_n=I'',
	\]
	and for each $j=1,\ldots,n-1$ either $(I_j,I_{j+1})\in D\cup \mathcal I_{id}$ or $(I_{j+1},I_j)\in D\cup \mathcal I_{id}$.

	\emph{Step (i). Connectivity.} Fix $j\in\{1,\ldots,n-1\}$. If $(I_j,I_{j+1})\in D$, then $Ev(I_j)=Ev(I_{j+1})$ because $Ev$ preserves $D$. If $(I_{j+1},I_j)\in D$, the same conclusion holds by symmetry of equality. If the pair belongs to $\mathcal I_{id}$, then $I_j=I_{j+1}$ and again $Ev(I_j)=Ev(I_{j+1})$.

	\emph{Step (ii). Transitivity.} Chaining the local equalities gives
	\[
		Ev(I_1)=Ev(I_2)=\cdots=Ev(I_n),
	\]
	so $Ev(I')=Ev(I'')$. Since $(I',I'')\in\overline D$ was arbitrary, $Ev\in \mathcal G_{\overline D}^{(\mathcal A)}$, and therefore
	\[
		\mathcal G_D^{(\mathcal A)}\subseteq \mathcal G_{\overline D}^{(\mathcal A)}.
	\]

	Combining the two inclusions yields
	\[
		\mathcal G_D^{(\mathcal A)}=\mathcal G_{\overline D}^{(\mathcal A)}.
	\]
\end{proof}

The second inclusion in the proof of Lemma~\ref{FT}
generalizes the chain argument of \citep{Birnbaum1962} and
\cite{BW1988} to an arbitrary relation $D$ and its equivalence
closure $\overline{D}$.

\paragraph{Finite closure argument.}\phantomsection\label{finite-closure-proof}
Let $I_i=(E_i,x_i)$, $i=1,2$, with $(I_1,I_2)\in L$. Write
$\ell_i(\theta)=f_{E_i,\theta}(x_i)$ and choose $k>0$ such that
$\ell_1=k\ell_2$. Set $r_1=1/(1+k)$ and $r_2=k/(1+k)$, so
$0<r_i<1$ and $r_1\ell_1=r_2\ell_2$.
Adjoin two new outcomes $u_i,v_i$ to $\mathcal X_{E_i}$ and define
$M_i$ on this disjoint union by
\[
\begin{aligned}
f_{M_i,\theta}(y)&=\frac{r_i}{2}f_{E_i,\theta}(y),
&&y\in\mathcal X_{E_i},\\
f_{M_i,\theta}(u_i)&=\frac{1-r_i\ell_i(\theta)}{2},
&\quad f_{M_i,\theta}(v_i)&=\frac{1-r_i+r_i\ell_i(\theta)}{2}.
\end{aligned}
\]
The masses sum to one and are nonnegative. Since $0\le\ell_i\le1$,
the new outcomes have positive mass for every parameter; the original
outcomes remain effective.
The events $A_i=\mathcal X_{E_i}$ and $B_i=\{x_i,u_i\}$ have
parameter-free probabilities $r_i/2$ and $1/2$, respectively.
Both events and their complements are nonempty, so their indicators
are ancillary and surjective onto $\{0,1\}$.
Conditioning on $A_i$ recovers $E_i$. Conditioning on $B_i$, identifying
$x_i$ with $0$ and $u_i$ with $1$, gives the same two-point model $H$:
\[
f_{H,\theta}(0)=r_i\ell_i(\theta),\qquad
f_{H,\theta}(1)=1-r_i\ell_i(\theta).
\]
Its first outcome is effective because $x_i$ is effective, and its second
has positive mass for every parameter. All conditioning steps use the
mixed-experiment branch of $C$; thus any injective codes for the auxiliary
models are admissible. By symmetry of $C$,
\[
\begin{aligned}
\bigl((E_1,x_1),(M_1,x_1)\bigr)&\in C,\\
\bigl((M_1,x_1),(H,0)\bigr)&\in C,\\
\bigl((H,0),(M_2,x_2)\bigr)&\in C,\\
\bigl((M_2,x_2),(E_2,x_2)\bigr)&\in C.
\end{aligned}
\]
By Lemma~\ref{bl:1}, these memberships imply
\[
\bigl((E_1,x_1),(E_2,x_2)\bigr)\in\overline C.
\]
Since the original pair was arbitrary in $L$, we obtain $L\subseteq\overline C$.
Conversely, $C\subseteq L$ and the
equivalence of $L$ imply $\overline C\subseteq L$.
Finally, $C\subseteq C\cup S\subseteq L$ gives $\overline{C\cup S}=L$.
This finite construction corresponds to the closure analysis of
\cite{Evans2013}.

\begin{proof}[Proof of Theorem \ref{BT1}]
	(a) {By $\overline C=L$, established in the
	\hyperref[finite-closure-proof]{finite closure argument above},} Lemma~\ref{FT} yields
	\[
		\mathcal G_C^{(\mathcal A)}=\mathcal G_{\overline C}^{(\mathcal A)}=\mathcal G_L^{(\mathcal A)}.
	\]

	(b) Since $S\subseteq L$, Lemma~\ref{lemma-a}(a) gives
	\[
		\mathcal G_L^{(\mathcal A)}\subseteq \mathcal G_S^{(\mathcal A)}.
	\]
	If $\mathcal A\neq\varnothing$ {and $|\Theta|\ge2$}, then $S$ and $L$ are equivalence relations and $S\subsetneq L$; therefore Lemma~\ref{lemma-b} yields
	\[
		\mathcal G_L^{(\mathcal A)}\subsetneq \mathcal G_S^{(\mathcal A)}.
	\]
	{If $|\Theta|=1$, every effective observed likelihood is
	positive, so $L=\mathcal I\times\mathcal I$; a constant statistic is
	minimal sufficient in every model, so $S=L$ and strictness is impossible.
	If $\mathcal A=\varnothing$, the empty procedure is the only procedure,
	so the preservation classes also coincide.}

	(c) By Lemma~\ref{lemma-a}(b),
	\[
		\mathcal G_C^{(\mathcal A)}\cap \mathcal G_S^{(\mathcal A)}=\mathcal G_{C\cup S}^{(\mathcal A)}.
	\]
	{By $L=\overline{C\cup S}$, established in the same
	\hyperref[finite-closure-proof]{finite closure argument},} Lemma~\ref{FT} yields
	\[
		\mathcal G_{C\cup S}^{(\mathcal A)}
		=
		\mathcal G_{\overline{C\cup S}}^{(\mathcal A)}
		=
		\mathcal G_L^{(\mathcal A)}.
	\]
	Hence
	\[
		\mathcal G_L^{(\mathcal A)}=\mathcal G_C^{(\mathcal A)}\cap \mathcal G_S^{(\mathcal A)}.
	\]
\end{proof}

\begin{proof}[Proof of Corollary \ref{BT2}]
	{By Lemma~\ref{FT} and $\overline C=L$, valid for every non-empty finite $\Theta$,} $\mathcal{G}_{C}^{(\mathcal{A})}=\mathcal{G}_{L}^{(\mathcal{A})}$. Hence, if $Ev\in\mathcal{G}_{C}^{(\mathcal{A})}$, then $Ev\in\mathcal{G}_{L}^{(\mathcal{A})}$. By Definition~\ref{preserving}, for every $(I_1,I_2)\in L$,
	\[
		\{a\in\mathcal{A}:(I_1,a)\in Ev\}
		=
		\{a\in\mathcal{A}:(I_2,a)\in Ev\}.
	\]
	In particular, this holds for every $(I_1,I_2)\in L\setminus C$.
\end{proof}

\begin{proof}[Proof of Corollary \ref{BT3}]
	{By Lemmas~\ref{lemma-a}(b) and~\ref{FT}, and
	$\overline{S\cup C}=L$, valid for every non-empty finite $\Theta$,}
	\[
		\mathcal{G}_{S}^{(\mathcal{A})} \cap \mathcal{G}_{C}^{(\mathcal{A})}
		=
		\mathcal{G}_{L}^{(\mathcal{A})}.
	\]
	Hence, if $Ev \in \mathcal{G}_{S}^{(\mathcal{A})} \cap \mathcal{G}_{C}^{(\mathcal{A})}$, then $Ev \in \mathcal{G}_{L}^{(\mathcal{A})}$. By Definition~\ref{preserving}, for every $(I_1,I_2)\in L$,
	\[
		\{a\in\mathcal{A}:(I_1,a)\in Ev\}
		=
		\{a\in\mathcal{A}:(I_2,a)\in Ev\}.
	\]
	In particular, this holds for every $(I_1,I_2)\in L\setminus (S\cup C)$.
\end{proof}

\paragraph{Reduced stable conditionality.}\phantomsection\label{reduced-stable-proof}
Within each minimal sufficient reduced model, let $\Gamma_0$ consist of
ancillary events whose intersections with every ancillary event are ancillary.
This is a finite algebra: it contains the whole space; complements follow
by subtracting intersection probabilities, and intersections by applying
stability twice. {Basu identifies $\Gamma_0$ with the laminal ancillary $\sigma$-field \citep[Definition~4 and Theorem~1, p.~250; Definition~7, p.~251; Theorem~5, p.~252]{Basu1959}.
In our finite reduced sample space, its nonempty atoms form a partition; we call these atoms laminal cells.} Each nonempty atom has positive probability for some parameter
by effectiveness, and therefore for every parameter by ancillarity.
The relation $SC_{\mathrm{red}}$ compares the observed-cell conditional PMFs,
zero outside those cells, under bijections of the full reduced sample spaces
matching the observed statistic values.

The bijection supplied by $S$ preserves all reduced-model probabilities
and intersections. It therefore transports $\Gamma_0$, its atoms and the
normalized conditional PMFs, including their zero extensions. Hence
$S\subseteq SC_{\mathrm{red}}$.
For any non-empty finite $\Theta$, consider the same coded fair-coin model
$f_{E,\theta}(0)=f_{E,\theta}(1)=1/2$ at observations $0$ and $1$.
The constant statistic is minimal sufficient, so $(E,0)$ and $(E,1)$ are
$S$-related and hence $SC_{\mathrm{red}}$-related. They are not $C$-related:
a nontrivial surjective selector has a proper observed fiber, which cannot
be in bijection with the full two-point sample space. In the ordinary
branch, preservation of the same injective code forces the identity map,
which cannot match the two observations. The argument applies in either
direction, proving $SC_{\mathrm{red}}\nsubseteq C$.
These arguments concern stability
after reduction, not stability in the original experiment.

\end{document}